\newtheorem{lemma}{\textbf{Lemma}}
\xpatchcmd{\proof}{\@addpunct{.}}{\@addpunct{:}}{}{}
\begin{document}

\title{Enhancing Non-Terrestrial Network Performance with Free Space Optical Links and Intelligent Reflecting Surfaces}

\author{Shunyuan Shang,~Emna~Zedini,~\IEEEmembership{Member,~IEEE,}~and~Mohamed-Slim~Alouini,~\IEEEmembership{Fellow,~IEEE} 
\thanks{(\textit{Corresponding author: Shunyuan Shang})}
\thanks{S. Shang and M.-S. Alouini are with the Computer, Electrical, and Mathematical Science and Engineering (CEMSE) Division, King Abdullah University of Science and Technology (KAUST), Thuwal, Makkah Province, Saudi Arabia (e-mail: shunyuan.shang@kaust.edu.sa; slim.alouini@kaust.edu.sa).}
    \thanks{E. Zedini is with the College of Innovation and Technology, University of Michigan-Flint, Flint, MI, USA (e-mail: ezedini@umich.edu).}}

\markboth{}%
{Shell \MakeLowercase{\textit{et al.}}: Enhancing Non-Terrestrial Network Performance with Free Space Optical Links and Intelligent Reflecting Surfaces}

\maketitle

\begin{abstract}
The integration of non-terrestrial networks (NTNs), which include high altitude platform (HAP) stations and intelligent reflecting surfaces (IRS) into communication infrastructures has become a crucial area of research to address the increasing requirements for connectivity and performance in the post-5G era. This paper presents a comprehensive performance study of a new NTN architecture, which enables communication from the optical ground station (OGS) to end users through the utilization of HAP and terrestrial IRS nodes. In this configuration, the HAP acts as an amplify-and-forward (AF) relay terminal between the free-space optical (FSO) link and the RF links.
 Specifically, the RF links are modeled using the Shadowed Rician and the generalized Nakagami-$m$ models, where the FSO link is characterized by the Gamma-Gamma distribution with generalized pointing errors. The FSO system operates under either intensity modulation with direct detection or heterodyne detection. Using the mixture Gamma model, we approximate the non-centered chi-square distribution that describes the total fading of the RF link, and we assess the performance of the end-to-end system by analyzing the ergodic capacity, the average bit-error rate (BER), and the outage probability, calculated using the bivariate Fox-H function. We also provide simple asymptotic expressions for the average BER and the outage probability at high signal-to-noise ratio (SNR). Finally, the proposed analysis is validated with numerical and Monte-Carlo simulation results, showing an exact match.

\end{abstract}

\begin{IEEEkeywords}
Non-terrestrial networks (NTNs), high altitude platform (HAP), intelligent reflecting surfaces (IRS), free-space optical (FSO) links, Gamma-Gamma, generalized pointing error, Shadowed Rician, mixture Gamma distribution.

\end{IEEEkeywords}

\IEEEpeerreviewmaketitle

\section{Introduction}
As the demand for data connectivity continues to surge due to the widespread use of digital technologies and connected devices, the focus on advancing communication technologies beyond 5G towards 6G has intensified \cite{jiang2021road,qadir2023towards,shen2023five,quy2023innovative,underrei}. 6G is anticipated to revolutionize wireless communication by introducing cutting-edge technologies that operate at Terahertz and optical frequencies, as well as innovative network architectures \cite{jiang2024terahertz,tomkos2020toward}. A key aspect of this evolution involves the exploration of non-terrestrial networks (NTNs), which aim to enhance traditional ground-based infrastructures by using aerial platforms like satellites, unmanned aerial vehicles (UAVs), and high altitude platform (HAP) stations  \cite{araniti2021toward,azari2022evolution,giordani2020non,iqbal2023empowering}. These NTNs have the potential of providing broad coverage with ultra-low latency, especially in remote and difficult terrains where terrestrial networks face limitations. The integration of NTNs into the communication infrastructure is considered crucial for extending connectivity to inaccessible areas and bridging the digital divide.

Among the several non-terrestrial options, we are particularly interested in HAPs because of their unique advantages over satellite systems, such as less expensive implementation and deployment due to the absence of space launch costs and relatively simple upgrade, repair, and redeployment \cite{abbasi2024haps}. Positioned in the stratosphere at altitudes between 17 to 22 kilometers above the earth's surface where wind velocities are low and atmospheric turbulence is minimal, HAPs serve as quasi-stationary aerial platforms and have diverse applications in various industries, such as broadcasting, internet connectivity, agriculture, environmental monitoring, emergency communication, surveillance, and disaster monitoring \cite{elamassie2023free}. 

Complementing the capabilities of HAPs, intelligent reflecting surfaces (IRS) emerge as a novel advancement in wireless communication technologies. These systems utilize multiple small, affordable, and energy-efficient devices that can dynamically manipulate wireless signals. IRS improve communication quality, coverage, and security by adjusting the phase and amplitude of reflected signals. They can be utilized on diverse surfaces, such as building facades, ceilings, and advertising boards, seamlessly integrating into existing infrastructures. Furthermore, the use of IRS is crucial in mitigating signal blockages that occur in mm-Wave communication. This leads to enhanced coverage and quality, resulting in more efficient and reliable wireless communication networks \cite{pan2022overview,kumar2022survey,sadia2023irs,okogbaa2022design}.
 While blockage effects are more severe in mm-wave communications, they also pose significant challenges in non-mm-wave scenarios, particularly due to the inherent line-of-sight (LOS) model associated with HAPs. HAPs are designed to provide broad coverage through LOS links and are often proposed for rural, remote, or outdoor environments. However, in urban settings with high buildings or for indoor users, the blockage effect becomes a critical issue, limiting the HAP’s ability to serve all users effectively \cite{HAPsintro1}. 
Our work addresses this problem by proposing the use of IRS between HAP and users as a solution to serve those in non-LOS (NLOS) environments. This approach guides the design of practical communication systems by demonstrating how IRS can be deployed to extend coverage, enhance link reliability, and improve connectivity in environments where direct LOS links are obstructed.

On the other hand, free-space optical (FSO) communication has attracted considerable interest over the years due to its inherent advantages over radio frequency (RF) technology. FSO offers higher data rates, simpler and more cost-effective installation, improved immunity and security, all without the constraints of licensing \cite{sood2018analysis,malik2015free,al2020survey,abd2024performance,rahmani2024massive}. Given these unique characteristics, FSO technology becomes a practical solution for establishing communication channels between ground stations and HAPs.
A thorough analysis of the uplink HAP FSO outage performance was conducted in \cite{SafariHAP2020}, and in order to get minimum error probability, the transmitted laser beam and the receiver's field of view (FoV) were optimized \cite{SafariHAP2020}. Later, a further investigation was carried out in \cite{TsiftsisHAP2022} where the average bit-error-rate (BER) and ergodic capacity were also added, emphasizing the performance optimization through appropriate design of the beam divergence angle, receiver aperture size, and FoV.
While prior research indicates that FSO communication systems present a good alternative to the traditional RF ground-to-HAP links, their overall performance is still limited by issues like turbulence-induced fading, misalignment pointing errors, and sensitivity to weather conditions such as fogs and clouds. Overcoming these challenges is essential to improve the reliability of FSO communication in HAPs-integrated systems. 

In order to achieve end-to-end connectivity characterized by high data rates, flexibility, cost-effectiveness, minimal latency, and energy efficiency, it is very essential to integrate both aerial and terrestrial networks within the communication infrastructure. In this context, this study investigates the end-to-end performance of HAPs-based FSO systems with terrestrial IRS nodes. In such configurations, IRS are deployed to facilitate the transmission of data from the HAP to terrestrial users, when LOS between the HAP and the users is obstructed. Despite the extensive research on HAPs-based FSO links \cite{ata2022haps,deka2022performance,liu2023joint,li2023ris,lou2023haps,10118907}, the integration of IRS has been largely overlooked. 
 By incorporating IRS, our model introduces a three-hop communication system (ground-to-HAP-to-IRS-to-users) instead of the conventional two-hop ground-to-HAP-to-users systems that have been predominantly studied in the literature \cite{10.3389/frcmn.2022.746201,9247498,9320603,7398128,9108615}. 
This novel three-hop architecture significantly differentiates our work by addressing the LOS blockage problem in urban areas and enhancing service to NLOS users, which was not fully explored in previous research.
By installing IRS on tall buildings or other potential obstacles, the signal path can be optimized by controlling and adjusting the reflection angle, allowing HAP signals to bypass obstacles and reach users who are blocked by nearby buildings or trees. In addition, IRS can enhance signal strength through phase adjustment, compensating for signal attenuation during obstacle penetration or long-distance transmission, ensuring that users receive sufficiently strong signals. IRS also actively controls signal reflection paths to reduce unnecessary reflections, thereby minimizing multipath effects and improving communication reliability and stability. Moreover, IRS can dynamically adjust its reflective properties to adapt to user movement and environmental changes, ensuring effective signal coverage to users in various conditions. 
However, the cascaded HAP-IRS-user link involves a more complex distribution compared to the simpler RF models used in previous studies. More specifically, 
the RF communication links between the HAP and IRS, as well as between IRS and users, are modeled using the Shadowed Rician and Nakagami-m fading channels, respectively.
Integrating IRS undoubtedly adds complexity to the RF cascaded link, resulting in a non-centered chi-square distribution. This complexity makes it very challenging to obtain exact closed-form expressions for the end-to-end system performance metrics. Therefore, in our study, we use a mixture Gamma model to accurately approximate the non-centered chi-square distribution. In addition, considering amplify-and-forward (AF) fixed gain at the HAP relay station further complicates the end-to-end analysis due the shift introduced in the expression of the end-to-end signal-to-noise ratio (SNR).

In the ground-to-HAP FSO link, we integrate the impacts of misalignment, modeled by the Hoyt distribution \cite{hpl1}, with the Gamma-Gamma distributed turbulence fading channel \cite{GGmodel}. The Gamma-Gamma distribution is widely recognized as the most appropriate model for describing atmospheric turbulence, particularly in moderate to strong turbulence scenarios. The pointing error, which accounts for deviations in both horizontal and vertical directions, is characterized using the Hoyt distribution \cite{hpl1}.  Most previous studies examining the impact of pointing errors in ground-to-HAP FSO links typically assume that the horizontal and vertical misalignments are independent and identically distributed with Gaussian statistics, sharing the same jitter variance. Consequently, the radial displacement at the receiver is often modeled using a Rayleigh distribution. In contrast, our study employs a generalized pointing error model that removes the assumption of identical jitter variance along both axes, offering a more accurate representation of real-world conditions. Notably, our results can also be simplified to match the commonly adopted scenario where the radial displacement follows a Rayleigh distribution, a widely used model in the literature. Moreover, the FSO link is assumed to operate under two different types of detection methods, namely intensity modulation with direct detection (IM/DD) and heterodyne.

We then conduct a thorough performance analysis of the end-to-end system, wherein the HAP serves as an AF fixed-gain relay terminal between FSO and RF links.
We first derive closed-form analytical expressions for the probability density function (PDF) and cumulative distribution function (CDF) of the end-to-end signal-to-noise ratio (SNR). Using these expressions, we derive analytical formulations for a number of performance metrics, including the ergodic capacity, the average BER for different modulation schemes, and the outage probability (OP). Furthermore, we obtained highly accurate asymptotic results for the OP and the average BER at high SNR conditions using simple functions.
 This work offers, for the first time, a unified analytical framework that can be used with the two types of detection approaches to calculate the fundamental performance metrics of NTNs enhanced with FSO links and IRS. 
The main contributions of this work are summarized as follows:
\begin{itemize}
    \item [--] By incorporating IRS into the RF communication link, our study offers a solution to serve users in NLOS conditions with the HAP.
    \item [--] The integration of IRS introduces complexity to the RF cascaded link, resulting in a non-central chi-square distribution that complicates the derivation of exact closed-form performance metrics. To address this, we use a mixture Gamma model to accurately approximate the distribution of the RF link, characterized by cascaded shadowed-Rician fading for the HAP-to-IRS link and Nakagami-$m$ fading for the IRS-to-users link.
    \item [--] For modeling the ground-to-HAP FSO link, we utilize the Gamma-Gamma distribution, which is considered highly effective for representing atmospheric turbulence under moderate to severe conditions. Moreover, we use the Hoyt distribution to account for pointing errors, as it can effectively capture deviations in both vertical and horizontal directions. Our study also covers both IM/DD and heterodyne detection techniques in FSO, providing a unified mathematical framework that applies to both methods.
    \item [--] We derive closed-form analytical expressions for the PDF, CDF, and moments of the end-to-end SNR. Using these expressions, we formulate analytical expressions for various performance metrics, including ergodic capacity, average BER for different modulation schemes, and OP, all expressed in terms of the bivariate Fox-H function.
     \item [--] We also present accurate asymptotic expressions for the OP and average BER at high SNR. These expressions are formulated using simple functions and are utilized to determine the diversity order of our system.   
\end{itemize}

The subsequent sections of this work are organized as follows. Section II introduces the system and channel models. In Section III, we conduct a statistical analysis of the end-to-end SNR and we derive analytical expressions for various performance metrics, alongside their asymptotic results at high SNR regime. Numerical and simulation results are presented in Section IV, and Section V provides concluding remarks.

\section{CHANNEL AND SYSTEM MODELS}
 Our system model consists of four interconnected nodes: the optical ground station (OGS), HAP, IRS, and users, as shown in Fig.~\ref{fig1}. The communication flow starts with the OGS sending signals to the HAP through a FSO link, leveraging the high bandwidth and large capacity of FSO communication. Acting as an AF relay with a fixed gain, the HAP efficiently receives and amplifies the incoming signals from the OGS before transmitting them to users.
However, practical challenges arise when the direct link between the HAP and users is obstructed by obstacles such as tall buildings or trees, as indicated by the blue dashed line in Fig. 1. These blockages significantly reduce transmission efficiency and quality, making the direct HAP-to-user link unavailable. To overcome this issue, the communication path is redirected through the IRS, strategically positioned between the HAP and users. The IRS effectively controls reflections and directs the signals to the intended users, mitigating the effects of signal degradation and interference commonly seen in traditional RF communication in urban environments, as depicted by the solid blue line in Fig.~\ref{fig1}. RF technology is employed for the HAP-to-user link due to its extensive coverage, which is well-suited for the wide distribution of users. In our work, we assume the HAP operates with a single phased array antenna that serves a single user.
The heights of OGS, HAP, IRS and user are denoted as $H_O$, $H_H$, $H_I$, and $H_U$, respectively. The distances between these entities are represented as $d_{OH}$ (OGS to HAP),  $d_{HI}$ (HAP to IRS), and $d_{IU}$ (IRS to user). The zenith angle of OGS is given by $\zeta$, and the horizontal distances are denoted as $d_{HI0}$ (HAP to IRS) and $d_{IU0}$ (IRS to user).
\begin{figure}[!h]
\centering\includegraphics[scale=0.83, trim={5 10 0 10}, clip]{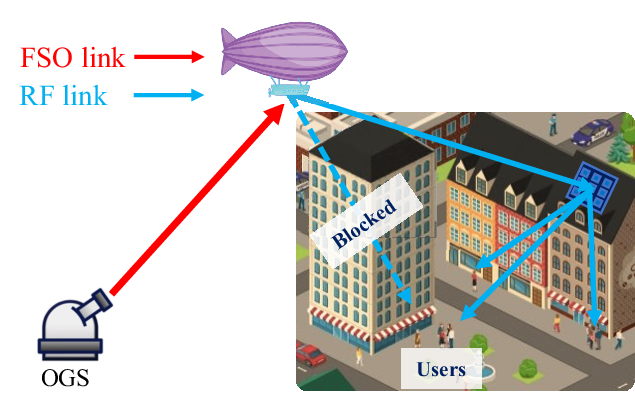}
    \caption{Integrated HAP-ground communication system with IRS.}
    \label{fig1}
\end{figure}
\subsection{FSO Channel}
The FSO channel model for communication from the OGS to the HAP $h$ is accurately characterized by considering the combined effects of atmospheric turbulence $h_{at}$, pointing error $h_{pl}$, and attenuation loss $h_{al}$
\begin{align}
    h=h_{al}h_{at}h_{pl}.
\end{align}
\subsubsection{Attenuation Loss}
The attenuation in the atmosphere, caused by absorption and scattering effects, follows the Beer-Lambert law, as given in \cite[Eq.~(1)]{hal1}
\begin{align}
    h_{al}=\exp\left[-C_h (\lambda)d_{OH}\right],
\end{align}
where $\lambda$ denotes the wavelength in nanometers [nm], $d_{OH}$ is the distance between OGS and HAP, $C_h (\lambda)$ represents the attenuation coefﬁcient which is specified in \cite[Eq.~(4)]{hal1} as
\begin{align}
    C_h(\lambda) = \frac{3.912}{V}\left(\frac{\lambda}{550}\right)^{-q_V},
\end{align}
where  V represents the visibility in kilometers [Km] and the coefficient $q_V$ is provided by \cite[Eq.~(5)]{hal1}
\begin{align}
q_V=
\begin{cases}
1.6, & V>50 \mathrm{Km} \\
1.3, & 6 \mathrm{Km}<V<50 \mathrm{Km} \\
0.585 V^{\frac{1}{3}}, & V<6 \mathrm{Km}
\end{cases}.
\end{align}
\subsubsection{Atmosphere Turbulence}
The Gamma-Gamma distribution is used to represent the PDF of the atmospheric turbulence channel, with its expression provided as \cite[Eq.(56), pp.462]{Laserbeam}
\begin{align}
\label{pdfhat}
    f_{h_{a t}}=\frac{2(\alpha \beta)^{\frac{\alpha+\beta}{2}}}{\Gamma(\alpha) \Gamma(\beta)} h_{a t}^{\frac{\alpha+\beta}{2}-1} K_{\alpha-\beta}\left(2 \sqrt{\alpha \beta h_{a t}}\right), h_{at}>0
\end{align}
where $K_a (\cdot)$ indicates the modified Bessel function of the second kind with the order $a$, and $\Gamma (\cdot)$ represents the Gamma function,
$\alpha=1/[\exp(\sigma_{\ln X}^2)-1]$, and $\beta=1/[\exp(\sigma_{\ln Y}^2 )-1]$. The large-scale log variance $\sigma_{\ln X}^2$ and the small-scale log variance $\sigma_{\ln Y}^2$ are provided by \cite[Eqs. (97) and (101), pp.352]{Laserbeam} as
\begin{align}
    \sigma_{\ln X}^2=\frac{0.49\sigma_B^2}{[1+0.56(1+\Theta) \sigma_B^{12/5} ]^{7/6}},
\end{align}
\begin{align}
    \sigma_{\ln Y}^2=\frac{0.51\sigma_B^2}{[1+0.69\sigma_B^{12/5} ]^{5/6}},
\end{align} 
where $\sigma_B^2$ represents the Rytov variance, which quantifies the scintillation index in weak turbulence scenarios. For the uplink propagation case, $\sigma_B^2$ is expressed as \cite[Eq.~(12)]{ata2022haps}
\begin{align}
       \nonumber  \sigma_{B}^{2} =  \,\,&8.7 {k_w}^{7 / 6}\left(H_H-H_O\right)^{5 / 6} \sec ^{11 / 6}(\zeta) \\ 
       \nonumber &\operatorname{Re}\Big(\int_{H_O}^{H_H} C_{n}^{2}(l)\Big\{\left[\Lambda {\xi_0}^{2}+\mathrm{i} \xi_0(1-\bar{\Theta} \xi_0)\right]^{5 / 6}\\ 
       &-\Lambda^{5 / 6} {\xi_0}^{5 / 3}\Big\} \mathrm{d} l\Big),
\end{align}
where $k_w=2\pi/\lambda$ is the wave number with $\lambda$ being the wavelength in meters [m], $\sec(\cdot)$ represents the Secant funtion, $\zeta$ denotes the zenith angle, $\Lambda=\Lambda_0/(\Lambda_0^2+\Theta_0^2 ) $ stands for the Fresnel ratio of the Gaussian beam at the receiver, $\Lambda_0=2d_{OH}/(k_wW_0^2 )$ where $W_0$ is the beam radius, $\Theta_0=1-d_{OH}/F_0$ represents the beam curvature parameter at the transmitter, ${\xi_0}=(l-H_H)/(H_O-H_H)$ is the normalized distance parameter for the uplink propagation case, $\bar{\Theta}=1-\Theta$ is the complementary parameter, and $\Theta=\Theta_0/(\Theta_0^2+\Lambda_0^2 )$ denotes the beam curvature parameter at the receiver. $C_n^2 (l)$ stands for the turbulence structure constant according to the conventionally used Hufnagel-Valley (HV) model, which is given by \cite[Eq.~(1), pp.481]{Laserbeam} 
\begin{align}
   \nonumber  C_{n}^{2}(l)=&0.00594(\omega / 27)^{2}\left(10^{-5} l\right)^{10}\exp (-l / 1000)\\ 
   \nonumber &+2.7 \times 10^{-16} \exp (-l / 1500)\\ 
   &+A \exp (-l / 1000),
\end{align}
where $l$ is in meters [m], $\omega$ is the root mean square (rms) windspeed in meters per second [m/s] and $A$ denotes the nominal value of $C_{n}^{2}(0)$, as detailed in \cite[pp.481]{Laserbeam}.
\subsubsection{Pointing Error}
In this paper, we employ a generalized model for the pointing error, which considers distinct jitter values for the horizontal and vertical displacements of the beam, as described by the Hoyt model.
The  PDF of $h_{pl}$ under these conditions is given by \cite[Eq.~(11)]{hpl1}
\begin{align}
\label{pdfhpl}
    f_{h_{p l}}\left(h_{p l}\right)=\frac{\eta_{s}^{2}}{2 \pi q_{H}} \int_{-\pi}^{\pi} \frac{h_{p l}^{\eta_{s}^{2} \xi(\varphi)-1}}{A_{0}^{\eta_{s}^{2} \xi(\varphi)}} d \varphi,
\end{align}
where $\eta_{s}$ is defined as $\omega_{b}\sqrt{\sqrt{\pi A_0}  /\left[2 v_e\exp \left(-v_e^{2}\right)\right]} /\left(2 \sigma_{s}\right)$,$v_e=r_{a}\sqrt{\pi / 2}  / \omega_{b}$ with $\omega_b$ representing the beam waist, $A_0=\text{erf}^2(v_e)$, and ${\rm erf}(\cdot)$ denoting the error function. The parameter $r_a$ stands for the radius of the receiver aperture. In addition, $q_H = \sigma_z /\sigma_s$, where $\sigma_s$ and $\sigma_z$ represent the variances of beam jitters in the vertical and horizontal directions, respectively and $\xi(\varphi)=[1-\left(1-q_{H}^{2}\right) \cos ^{2} \varphi]/q_{H}^{2}$.

The PDF of the combined channel state $h$ can be written as
\begin{align}
\label{PDFh0}
    f_{h}(h)=\int_{\frac{h}{A_{0} h_{a l}}}^{\infty} f_{h_{p l}}\left(\frac{h}{h_{a t} h_{a l}}\right) \frac{f_{h_{a t}}\left(h_{a t}\right)}{h_{a t} h_{a l}}  dh_{a t}.
\end{align}
Substituting (\ref{pdfhat}) and (\ref{pdfhpl}) into (\ref{PDFh0}) yields
\begin{align}
    \nonumber  f_{h}(h)&=\int_{\frac{h}{A_{0} h_{a l}}}^{\infty} \frac{\eta_{s}^{2}}{2 \pi q_{H}} \int_{-\pi}^{\pi} \frac{\left(\frac{h}{h_{a t} h_{a l}}\right)^{\eta_{s}^{2} \xi(\varphi)-1}}{A_{0}^{\eta_{s}^{2} \xi(\varphi)}} d\varphi \\&
     \times\frac{\frac{2(\alpha \beta)^{(\alpha+\beta) / 2}}{\Gamma(\alpha) \Gamma(\beta)} h_{a t}^{\frac{(\alpha+\beta)}{2}-1} K_{\alpha-\beta}\left(2 \sqrt{\alpha \beta h_{a t}}\right)}{h_{a t} h_{a l}} dh_{a t}.
\end{align}
Employing \cite[Eq.~(14)]{MeijerGalgorithm} to represent $K_{\alpha-\beta}(\cdot)$ using the Meijer-G function along with \cite[Eq.~(07.34.21.0085.01)]{Wolfram} and \cite[Eq.~(1.60)]{Htran2}, we can obtain the PDF of $h$ as
\begin{align}
 \label{PDFh}
\nonumber f_{h}\left(h\right)&=\frac{\eta_{s}^{2} }{2\pi q_H \Gamma(\alpha) \Gamma(\beta) h}  \\ 
&\times \int_{-\pi}^{\pi}{\rm G}_{1,3}^{3,0}\left[\frac{\alpha \beta h}{A_{0} h_{a l}  } \Bigg| \begin{array}{c}
1+\eta_{s}^{2}\xi(\varphi) \\
\eta_{s}^{2}\xi(\varphi), \alpha, \beta
\end{array}\right]d\varphi,
\end{align}
where ${\rm G}^{m,n}_{p,q}[\cdot]$ represents the Meijer-G function \cite[Eq.~(9.301)]{intetable}.

The received signal at the HAP from the OGS can be expressed as
\begin{align}
    y_{H}(t)=\eta P_{0} h s(t)+n_{H}(t),
\end{align}
where $P_0$ denotes the transmit power at the OGS, $\eta$ represents the optical-to-electrical conversion coefficient, and $n_{H}(t) \sim \mathcal{N}\left(0, \sigma_{H}^{2}\right)$ refers to the Gaussian noise process with zero mean value and variance $\sigma_{H}^{2}$. Then the SNR of the FSO link $\gamma_{H}$ can be formulated considering both IM/DD and heterodyne detections as
\begin{align}
\label{gammaHbar}
    \gamma_{H}=\overline{\gamma_{H}} h^{r},
\end{align}
where $\overline{\gamma_{H}}=\frac{\left(\eta P_{0}\right)^{r}}{\sigma_{H}^{2}}$ and the parameter $r$ varies depending on the type of detection technique employed, with $r=1$ for the heterodyne technique and $r=2$ for IM/DD.

The PDF of $\gamma_H$ can be obtained from (\ref{PDFh}) by applying the random variable transformation in (\ref{gammaHbar}) as
\begin{align}
 \label{PDFH}
\nonumber &f_{\gamma_{H}}\left(\gamma_{H}\right)=\frac{\eta_{s}^{2} }{2\pi q_Hr \Gamma(\alpha) \Gamma(\beta)\gamma_{H}} \\ 
& \times \int_{-\pi}^{\pi}{\rm G}_{1,3}^{3,0}\left[\frac{\alpha \beta}{A_{0} h_{a l}  }\left(\frac{ \gamma_{H}}{\overline{\gamma_{H}}}\right)^{\frac{1}{r}} \Bigg| \begin{array}{c}
1+\eta_{s}^{2}\xi(\varphi) \\
\eta_{s}^{2}\xi(\varphi), \alpha, \beta
\end{array}\right]d\varphi.
\end{align}
Then, by using (\ref{PDFH}) and applying \cite[Eq.~(2.24.2.3)]{prudnikov1}, we get the CDF of $\gamma_H$ as
\begin{align}
\label{CDFH}
\nonumber &F_{\gamma_{H}}\left(\gamma_{H}\right) =1-\frac{\eta_{s}^{2} }{2\pi q_H \Gamma(\alpha) \Gamma(\beta)}\\ 
&\times \int_{-\pi}^{\pi}{\rm G}_{2,4}^{4,0}\left[\frac{\alpha \beta}{A_{0} h_{a l} }\left(\frac{ \gamma_{H}}{\overline{\gamma_{H}}}\right)^{\frac{1}{r}} \Bigg| \begin{array}{c}
1+\eta_{s}^{2}\xi(\varphi), 1 \\
0, \eta_{s}^{2}\xi(\varphi), \alpha, \beta
\end{array}\right]d\varphi.
\end{align}
\subsection{RF Channel}
\subsubsection{Shadowed Rician Fading Channel}
The channel state from the HAP to the IRS, denoted by $\alpha_{si}$, is characterized as a shadowed Rician (SR) channel, with the PDF of $\left|\alpha_{si}\right|^{2}$ described in \cite[Eq.~(6)]{showedrician} as
\begin{align}
\nonumber & f_{\left|\alpha_{si}\right|^{2}}(x)=\left[\frac{2 b_R m_R}{2 b_R m_R+\Omega_R}\right]^{m_R}\frac{\exp{\left(-\frac{x}{2b_R}\right)}}{2 b_R} \\ 
& \times { }_{1} F_{1}\left(m_R , 1 , \frac{\Omega_R\, x}{2 b_R(2 b_R m_R+\Omega_R)} \right), \quad x\ge 0
\end{align}
where ${ }_{1} F_{1}(\cdot)$ stands for the confluent hypergeometric function, $m_R$ denotes the fading severity parameter, $2b_R$ is the average power of the multipath component, and $\Omega_R$ represents the average power of the line-of-sight (LOS) component \cite{showedrician}.

The $s$-th moments of the SR fading channel are given in terms of the Gauss hypergeometric function as \cite[Eq.~(5)]{showedrician}
\begin{align}
\nonumber &\mathbb{E}\left({\alpha_{si}}^{s}\right)=\left(\frac{2 b_{R} m_{R}}{2 b_{R} m_{R}+\Omega_{R}}\right)^{m_{R}}\left(2 b_{R}\right)^{\frac{s}{2}} \\ 
&\times \Gamma\left(\frac{s}{2}+1\right){ }_{2} F_{1}\left(\frac{s}{2}+1, m_{R}, 1, \frac{\Omega_{R}}{2 b_{R} m_{R}+\Omega_{R}}\right).
\end{align}
\subsubsection{Nakagami-m Fading Channel}
The channel state from the IRS to users, represented as $\beta_{si}$, is assumed to follow the Nakagami-m fading channel with its PDF given as \cite[Eq.~(2.3/67)]{digital}
\begin{align}
    f_{\beta_{si}}(x)={\left(\frac{m_{N}}{\sigma_{N}^{2}}\right)}^{m_N} \frac{2 x^{2 m_{N}-1}}{\Gamma\left(m_{N}\right)} \exp \left(-\frac{m_{N} x^{2}}{\sigma_{N}^{2}}\right), \quad x\ge 0
\end{align}
where $m_N$ represents the fading parameter.
The $s$-th moments of $\beta_{si}$ are provided in \cite{digital} as 
\begin{align}
\mathbb{E}\left({\beta_{si}}^{s}\right)=\frac{\Gamma\left(m_{N}+\frac{s}{2}\right)}{\Gamma\left(m_{N}\right)}\left(\frac{\sigma_{N}^{2}}{m_{N}}\right)^{\frac{s}{2}}.
\end{align}
\subsubsection{Pathloss Modeling} 
The pathloss from the HAP to the IRS, indicated by $P L_{(\mathrm{HI})}$, can be expressed as
\begin{align}
    P L_{(\mathrm{HI})}[\mathrm{dB}]=L_{\mathrm{FSPL}}+L_{\mathrm{Rain}}+L_{\mathrm{Atm} }+L_{\text {oth }},
\end{align}
where $L_{\mathrm{Rain}}$ denotes the rain-induced loss in decibels per kilometer (dB/km), $L_{\mathrm{Atm} }$ represents the gaseous atmospheric absorption loss, and $L_{\text {oth}}$ stands for miscellaneous losses (dB). Further, $L_{\mathrm{FSPL}}$ is the free-space 
path loss (dB), given as $L_{\mathrm{FSPL}} = 92.45 + 20 \log _{10}(f_c)+20\log_{10}(d_{HI})$ with $f_c$ being the carrier 
frequency (in GHz), and $d_{HI}$ is the distance between the HAP and user nodes.
In addition, the pathloss from the IRS to the end users, $P L_{(\mathrm{IU})}$, can be formulated as
\begin{align}
   \nonumber P L_{\left(\mathrm{IU}\right)}[\mathrm{dB}]&=40 \log _{10}\left(d_{\mathrm{I} \mathrm{U}}\right)-20 \log _{10}\left(H_{I}\right)\\ 
   &-20 \log _{10}\left(H_{U}\right),
\end{align}
where $d_{\mathrm{I} \mathrm{U}}$ represents the distance between the IRS and users, and  $H_I$ stands for the height of building where the IRS is placed.

The received signal at the the user node can be given as
\begin{align}
    y_{U}(t)=\sqrt{P_{h}} \boldsymbol{g}_{s}^{T} \zeta_{I} \boldsymbol{h}_{s} s(t)+n_{U}(t),
\end{align}
where $\boldsymbol{h_s}$ and $\boldsymbol{g_s}$ are the channel vectors for HAP to IRS and IRS to users, respectively, given as $\boldsymbol {h_s}=[h_{s1},\ldots,h_{sN} ]^T$ and $\boldsymbol {g_s}=[g_{s1},\ldots,g_{sN }]^T$, with $h_{si}=\alpha_{si} \exp{({\mathrm{i}\theta_{hi} })}$ and  $g_{si}=\beta_{si} \exp{({\mathrm{i}\theta_{gi}}) }$. $\zeta_{I}=\operatorname{diag}\left[\varrho_{1} \exp{({-\mathrm{i} \phi_{I 1}})}, \ldots \varrho_{N} \exp{({-\mathrm{i}\phi_{I N}})}\right]$ is a matrix of IRS meta-surface induced complex valued reflection coefficient with attenuation coefficient $\varrho \in[0,1] $ and phase shift $\phi_{I} \in[0,2 \pi]$, and $n_{U}(t) \sim \mathcal{N}\left(0, {\sigma_{U}}^{2}\right)$. In addition, $P_h$ can be expressed as
\begin{align}
     \nonumber P_h[dB]& = P_{0}[dB]-PL_{(\mathrm{HI})}[dB]\\&-PL_{(\mathrm{IU})}[dB]+G_{Tx}[dB]+G_{Rx}[dB],
\end{align}
where $G_{\mathrm{Tx}}$ refers to the transmitter antenna gain (dB) and $G_{\mathrm{Rx}}$ represents the receiver antenna gain (dB).

Finally, the SNR for the RF channel, $\gamma_{U}$, can be formulated as
\begin{align}
\gamma_{U}=\overline{\gamma_{U}}\left|\sum_{i=1}^{N} \alpha_{si} \beta_{si} \varrho_{si} \exp\left[{-\mathrm{i}\left(\Phi_{i}-\theta_{hsi}-\theta_{gsi}\right)}\right]\right|^{2},
\end{align}
where $\overline{\gamma_U}={P_{h}}/{{\sigma_U}^{2}}$. 
 In our analysis, we assume that the channel phases of $h_{si}$ and $g_{si}$ for $i=1,\ldots,N$ are perfectly known at the RIS. This assumption represents the optimal scenario for system performance and serves as a benchmark for evaluating practical applications \cite{IRSBasar}.
To achieve the highest possible SNR at the user node, the reflection coefficient induced by the IRS is selected in an optimal manner such that $\forall i \,\varrho_{si}=1 $. Moreover, the optimal choice for 
$\Phi_{si}$ which maximizes the instantaneous SNR is given by $\Phi_{si}=\theta_{hsi}+\phi_{gi}$ for $i=1,\ldots,N$ \cite{IRSBasar,RFchannel}. \cite{Badiu20} investigates how errors in reflector phases impact error probability and demonstrates that the system remains robust, with communication performance proving notably resilient even with imperfect phase estimation.
Therefore, the optimal maximized SNR for the RF channel is expressed as
\begin{equation}
\gamma_{U}=\overline{\gamma_{U}}\left|\sum_{i=1}^{N} \alpha_{si} \beta_{si}\right|^{2}.
\end{equation}
\begin{lemma}
 The PDF of the SNR for the RF link from the HAP to the user via the IRS, $\gamma_U$, can be expressed as
\begin{align}
\label{PDFU}
\nonumber&f_{\gamma_{U}}\left(\gamma_{U}\right)=\frac{\left(\overline{\gamma_{U}} {\mu_{Z}}^{2}\right)^{\frac{1}{4}}}{2 \overline{\gamma_{U}} {\sigma_{Z}}^{2}} \exp \left(-\frac{{\mu_{Z}}^{2}}{2  {\sigma_{Z}}^{2}}\right)\\
 &\times{\gamma_{U}}^{-\frac{1}{4}} \exp \left(-\frac{\gamma_{U}}{2 \overline{\gamma_{U}} {\sigma_{Z}}^{2}}\right)  I_{-\frac{1}{2}}\left(\frac{\mu_{Z}}{{\sigma_{Z}}^{2}} \sqrt{\frac{\gamma_{U}}{\overline{\gamma_{U}}}}\right),
\end{align}
where $\mu_Z=N\mathbb{E}({\alpha_{si}})\mathbb{E}({\beta_{si}})$, ${\sigma_Z}^2=N[\mathbb{E}({\alpha_{si}}^2)\mathbb{E}({\beta_{si}}^2)-{(\mu_Z/N)}^2]$, and $I_v(\cdot)$ represents the $v$th-order modified Bessel function of the first kind. This expression is particularly accurate when the number of IRS units, 
$N$, is large, with its accuracy improving as 
$N$ increases. This highlights the impact of IRS size on the performance of the RF link.
\end{lemma}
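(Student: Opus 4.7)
\emph{Proof proposal.} The plan is to reduce the sum $Z=\sum_{i=1}^{N}\alpha_{si}\beta_{si}$ to a single Gaussian random variable via the central limit theorem, and then obtain $f_{\gamma_{U}}$ by recognizing that $\gamma_{U}=\overline{\gamma_{U}}\,Z^{2}$ is a scaled non-central chi-square with one degree of freedom.

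First I would note that the products $\alpha_{si}\beta_{si}$, for $i=1,\ldots,N$, are independent and identically distributed, since the SR channels $\{\alpha_{si}\}$, the Nakagami-$m$ channels $\{\beta_{si}\}$, and the two groups among themselves are all independent. For large $N$, the CLT therefore gives $Z\approx\mathcal{N}(\mu_{Z},\sigma_{Z}^{2})$, where by independence of $\alpha_{si}$ and $\beta_{si}$ one has $\mathbb{E}(\alpha_{si}\beta_{si})=\mathbb{E}(\alpha_{si})\,\mathbb{E}(\beta_{si})$, so $\mu_{Z}=N\,\mathbb{E}(\alpha_{si})\,\mathbb{E}(\beta_{si})$, and $\mathrm{Var}(\alpha_{si}\beta_{si})=\mathbb{E}(\alpha_{si}^{2})\,\mathbb{E}(\beta_{si}^{2})-[\mathbb{E}(\alpha_{si})\,\mathbb{E}(\beta_{si})]^{2}$, giving $\sigma_{Z}^{2}=N\,[\mathbb{E}(\alpha_{si}^{2})\,\mathbb{E}(\beta_{si}^{2})-(\mu_{Z}/N)^{2}]$, which matches the parameters stated in the lemma. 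The requisite moments are already supplied in the preceding subsections through the closed-form $s$-th moment expressions for the SR and Nakagami-$m$ distributions.

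Next I would perform two successive changes of variables. Writing $Y=Z^{2}/\sigma_{Z}^{2}$, the Gaussianity of $Z$ implies that $Y$ follows a non-central chi-square distribution with one degree of freedom and non-centrality parameter $\lambda=\mu_{Z}^{2}/\sigma_{Z}^{2}$, whose PDF is the standard expression $f_{Y}(y)=\tfrac{1}{2}e^{-(y+\lambda)/2}(y/\lambda)^{-1/4}I_{-1/2}(\sqrt{\lambda y})$. Applying the Jacobian for the affine transformation $\gamma_{U}=\overline{\gamma_{U}}\,\sigma_{Z}^{2}\,Y$ and simplifying the prefactor $(\gamma_{U}/(\overline{\gamma_{U}}\mu_{Z}^{2}))^{-1/4}=(\overline{\gamma_{U}}\mu_{Z}^{2})^{1/4}\gamma_{U}^{-1/4}$ collects the constants in front of the exponential decay $\exp(-\mu_{Z}^{2}/(2\sigma_{Z}^{2}))$ and reproduces exactly the form claimed in~(\ref{PDFU}).

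The main obstacle is justifying the Gaussian approximation for $Z$, and this is precisely the reason the lemma states the result is accurate for large $N$. Formally, one would invoke the Lyapunov or Lindeberg CLT on the i.i.d.\ sequence $\{\alpha_{si}\beta_{si}\}$, checking that the products have finite variance (guaranteed by the closed-form second moments of the SR and Nakagami-$m$ distributions). A secondary technical care point is that $Z$ is non-negative by construction, yet the Gaussian approximation assigns positive mass to negative values; since $|Z|^{2}=Z^{2}$ the identification with a non-central chi-square is still exact for the approximating Gaussian, and the error introduced by the tail in $(-\infty,0)$ vanishes as $\mu_{Z}/\sigma_{Z}=\sqrt{N}\,\mathbb{E}(\alpha_{si})\mathbb{E}(\beta_{si})/\sqrt{\mathrm{Var}(\alpha_{si}\beta_{si})}\to\infty$, which is the regime the lemma explicitly targets. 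All remaining steps are purely algebraic manipulations of the chi-square density and the change-of-variables formula.
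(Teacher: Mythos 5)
Your proposal is correct and follows essentially the same route as the paper's Appendix~\ref{A}: invoke the CLT on the i.i.d.\ products $\alpha_{si}\beta_{si}$ to approximate $Z$ as Gaussian, identify $Z^{2}$ with a non-central chi-square distribution with one degree of freedom, and apply the change of variables $\gamma_{U}=\overline{\gamma_{U}}\,Z^{2}$. Your write-up is in fact slightly more careful than the paper's, since it derives the variance in the form $N[\mathbb{E}(\alpha_{si}^{2})\mathbb{E}(\beta_{si}^{2})-(\mu_{Z}/N)^{2}]$ consistent with the lemma statement and explicitly addresses the negligible negative-tail mass of the Gaussian approximation, points the paper leaves implicit.
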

\begin{proof}
See Appendix \ref{A}.
\end{proof}
\begin{figure*}[b]
\hrule
\begin{align*}
\label{finalCDFA}
\nonumber F_{\gamma}(\gamma)\underset{\overline{\gamma_H}\gg 1}{\mathop{\approx }}
& -\frac{ \eta_{s}^{2}}{2 \pi q_{H} \Gamma(\alpha) \Gamma(\beta)} \int_{-\pi}^{\pi} \sum_{i=1}^{N_{x}} \alpha_{x i}\\
\nonumber &
\times \Bigg\{\frac{C^{\beta_{x i}} \Gamma\left(\alpha-\eta_{s}^{2} \xi(\varphi)\right) \Gamma\left(\beta-\eta_{s}^{2} \xi(\varphi)\right)}{r \Gamma\left(1-\frac{\eta_{s}^{2} \xi(\varphi)}{r}\right)}\left[\frac{\alpha \beta}{A_{0} h_{a l}}\left(\frac{\gamma}{\overline{\gamma_{H}}}\right)^{\frac{1}{r}}\right]^{\eta_{s}^{2} \xi(\varphi)} {\rm G}_{1,2}^{2,1}\left[C \zeta_{x i}  \begin{array}{|c}
1-\beta_{x i}+\frac{\eta_{s}^{2} \xi(\varphi)}{r} \\
0,-\beta_{x i}
\end{array}\right]\\
\nonumber &+\frac{C^{\beta_{x i}} \Gamma(\beta-\alpha)}{r \Gamma\left(1-\frac{\alpha}{r}\right)\left(\eta_{s}^{2} \xi(\varphi)-\alpha\right)}\left[\frac{\alpha \beta}{A_{0} h_{a l} }\left(\frac{\gamma}{\overline{\gamma_{H}}}\right)^{\frac{1}{r}}\right]^{\alpha} {\rm G}_{1,2}^{2,1}\left[C \zeta_{x i} \begin{array}{|c}
1-\beta_{x i}+\frac{\alpha}{r} \\
0,-\beta_{x i}
\end{array}\right] \\&+\frac{C^{\beta_{x i}} \Gamma(\alpha-\beta)}{r \Gamma\left(1-\frac{\beta}{r}\right)\left(\eta_{s}^{2} \xi(\varphi)-\beta\right)}\left[\frac{\alpha \beta}{A_{0} h_{a l}}\left(\frac{\gamma}{\overline{\gamma_{H}}}\right)^{\frac{1}{r}}\right]^{\beta} {\rm G}_{1,2}^{2,1}\left[C \zeta_{x i} \begin{array}{|c}
1-\beta_{x i}+\frac{\beta}{r} \\
0,-\beta_{x i}
\end{array}\right]\Bigg\} d \varphi.\tag{36}
\end{align*}
\hrule
\end{figure*}
\noindent The CDF of $\gamma_U$ is given as \cite[Eq.~(2.3-35)]{digital}
\begin{align}
\label{CFU}
F_{\gamma_{U}}\left(\gamma_{U}\right)=1-Q_{\frac{1}{2}}\left(\frac{\mu_{Z}}{\sigma_{Z}}, \frac{\sqrt{\gamma_{U}}}{\sqrt{\overline{\gamma_{U}}} \sigma_{Z}}\right),
\end{align}
where $Q_m(a,b)$ denotes the generalized Marcum Q-function.

Given the complex form and mathematical intractability of the non-centred chi-square distribution, we employ the mixture Gamma model \cite{mixturegamma} to approximate it, as demonstrated in Lemma 2.
\begin{lemma}
The PDF of $\gamma_{U}$ can be represented using a mixture Gamma distribution as
    \begin{align}
    \label{PDFUa}
        f_{\gamma_{U}}(\gamma_{U}) \cong \sum_{i=1}^{N_{x}} \alpha_{xi} {\gamma_{U}}^{\beta_{xi}-1} \exp \left(-\zeta_{xi} \gamma_{U}\right),
    \end{align}
with parameters obtained by using PDF matching as
    \begin{align}  \label{PDFparam}
        \begin{cases}
        \alpha_{x i}=\frac{\theta_{xi}}{\sum_{j=1}^{N_{i}} \theta_{xj} \Gamma\left(\beta_{x j}\right) \zeta_{x j}^{-\beta_{x j}}}, 
        \\[2ex]
        \beta_{x i}=-\frac{1}{2}+i,
 \\[2ex]
 \zeta_{x i}=({2 \overline{\gamma_{U}} {\sigma_{Z}}^{2}})^{-1},
 \\[2ex]
 \theta_{x i}=\frac{\left(\overline{\gamma_{U}} {\mu_{Z}}^{2}\right)^{\frac{1}{4}}}{2 \overline{\gamma_{U}} {\sigma_{Z}}^{2}}\frac{ \exp \left(-\frac{{\mu_{Z}}^{2}}{2 {\sigma_{Z}}^{2}}\right)}{(i-1) ! \Gamma\left(i-\frac{1}{2}\right)}\left(\frac{\mu_{Z}}{2{\sigma_{Z}}^{2}\sqrt{{\overline{\gamma_{U}}}}} \right)^{-\frac{5}{2}+2  i}.
\end{cases}
    \end{align}
    \begin{proof}
See Appendix \ref{B}.
\end{proof}
\end{lemma}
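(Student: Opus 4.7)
The plan is to expand the modified Bessel function in the closed-form PDF of Lemma 1 as a power series in $\gamma_U$, recognize each term as a (scaled) Gamma kernel of the form $\gamma_U^{\beta_{xi}-1}\exp(-\zeta_{xi}\gamma_U)$, truncate the infinite sum to $N_x$ components, and finally renormalize to obtain a bona-fide mixture PDF, which yields the coefficients $\alpha_{xi}$.

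First I would start from the PDF given in Lemma 1 and substitute the standard ascending series representation
\begin{align*}
I_{-\tfrac{1}{2}}(z)=\sum_{k=0}^{\infty}\frac{1}{k!\,\Gamma\!\left(k+\tfrac{1}{2}\right)}\left(\frac{z}{2}\right)^{2k-\tfrac{1}{2}},
\end{align*}
with $z=(\mu_Z/\sigma_Z^{2})\sqrt{\gamma_U/\overline{\gamma_U}}$. All $\gamma_U$-dependence outside the exponential then collapses into a single power of $\gamma_U$ per series term: combining the prefactor $\gamma_U^{-1/4}$ with $\gamma_U^{k-1/4}$ coming from the $k$-th Bessel term yields $\gamma_U^{k-1/2}$. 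Reindexing with $i=k+1$ gives exactly the exponents $\beta_{xi}-1=i-3/2$ claimed in \eqref{PDFparam}, while the residual exponential $\exp\bigl(-\gamma_U/(2\overline{\gamma_U}\sigma_Z^{2})\bigr)$ produces $\zeta_{xi}=1/(2\overline{\gamma_U}\sigma_Z^{2})$, independent of $i$.

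Next I would collect all $\gamma_U$-independent factors of the $i$-th term and label them $\theta_{xi}$. This amounts to bookkeeping: multiplying the overall prefactor $(\overline{\gamma_U}\mu_Z^{2})^{1/4}\exp(-\mu_Z^{2}/(2\sigma_Z^{2}))/(2\overline{\gamma_U}\sigma_Z^{2})$ by the $k=i-1$ coefficient of the Bessel series, namely $[(i-1)!\,\Gamma(i-\tfrac{1}{2})]^{-1}(\mu_Z/(2\sigma_Z^{2}\sqrt{\overline{\gamma_U}}))^{2i-5/2}$, which is precisely the expression for $\theta_{xi}$ in \eqref{PDFparam}. At this stage the \emph{exact} PDF is written as an infinite mixture $\sum_{i=1}^{\infty}\theta_{xi}\gamma_U^{\beta_{xi}-1}\exp(-\zeta_{xi}\gamma_U)$.

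Finally I would truncate the series at $N_x$ terms, which destroys the unit-integral property, and restore normalization by dividing each $\theta_{xi}$ by $\sum_{j=1}^{N_x}\theta_{xj}\int_0^{\infty}\gamma_U^{\beta_{xj}-1}e^{-\zeta_{xj}\gamma_U}\,d\gamma_U=\sum_{j=1}^{N_x}\theta_{xj}\Gamma(\beta_{xj})\zeta_{xj}^{-\beta_{xj}}$, yielding the stated $\alpha_{xi}$. The main obstacle is conceptual rather than computational: one must justify that the term-by-term reordering preserves convergence and that the truncation plus renormalization yields a controlled approximation. I would handle this by invoking the absolute convergence of the Bessel series on compact sets of $\gamma_U$, which permits termwise integration, and by noting that the mixture-Gamma representation is known (cf.\ the reference \cite{mixturegamma}) to converge uniformly on any bounded support as $N_x\to\infty$, so that the truncation error can be made arbitrarily small by choosing $N_x$ sufficiently large, matching the accuracy comment in Lemma 2.
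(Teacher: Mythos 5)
Your proposal follows essentially the same route as the paper's Appendix B: expand $I_{-1/2}(\cdot)$ via its ascending series (the paper's citation of \cite[Eq.~(8.445)]{intetable}), reindex with $i=k+1$ to read off $\beta_{xi}$, $\zeta_{xi}$, and $\theta_{xi}$ as the exact infinite-mixture coefficients, then truncate at $N_x$ and renormalize via $\alpha_{xi}=\theta_{xi}/\sum_{j}\theta_{xj}\Gamma(\beta_{xj})\zeta_{xj}^{-\beta_{xj}}$ following \cite{mixturegamma}. Your added remarks on termwise integration and convergence of the truncation go slightly beyond what the paper states, but the derivation itself is the same.
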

 It is important to mention that (\ref{PDFUa}) effectively simplifies the complex non-central chi-square distribution of the HAP-to-IRS-to-users RF link and shows a strong alignment with Monte-Carlo simulation results. Moreover, the accuracy of this expression improves as the number of Gamma distributions, $N_x$, increases.
\section{END-TO-END SYSTEM PERFORMANCE}
\subsection{End-to-End SNR Statistics}
The end-to-end SNR in the fixed-gain relaying scheme can be obtained by considering the expression given by \cite[Eq.~(28)]{fixedgain}, assuming that saturation can be ignored as
\begin{align}\label{TSNR}
\gamma=\frac{\gamma_{H} \gamma_{U}}{\gamma_{U}+C},
\end{align}
where $C$ represents a constant relay gain.
\begin{lemma}
The CDF and PDF of the overall SNR can be derived in terms of the bivariate Fox-H function, also referred to as the Fox-H function of two variables whose implementation is provided in \cite{Peppas} as 
\begin{align}
    \label{finalCDF}
\nonumber &F_{\gamma}(\gamma)={1-\frac{ \eta_{s}^{2} }{2\pi q_Hr \Gamma(\alpha) \Gamma(\beta)}   \sum_{i=1}^{N_{x}}\alpha_{xi} \zeta_{xi}^{-\beta_{xi}}\int_{-\pi}^{\pi}{\rm H}_{1,0 ; 0,2  ; 3,2}^{0,1;2,0 ; 0,3}}\\
&\times\footnotesize{\left[\!\!\left.\begin{matrix}
\left(1 ; 1, \frac{1}{r}\right) \\
-\\ -\\
(0,1)\left(\beta_{xi}, 1\right) \\
\left(1-\eta_{s}^{2}\xi(\varphi), 1\right)(1-\alpha, 1)(1-\beta, 1) \\
\left(-\eta_{s}^{2}\xi(\varphi), 1\right)\left(0, \frac{1}{r}\right)
\end{matrix}\right|{\zeta_{xi} C, \frac{A_{0} h_{a l}\left(\frac{\overline{\gamma_{H}}}{\gamma}\right)^{\frac{1}{r}} }{\alpha \beta}}\!\!\right]}\!\!\!d\varphi,
    \end{align}
and
\begin{align}
 \label{finalPDF}
\nonumber &f_{\gamma}(\gamma)=\frac{ \eta_{s}^{2}}{2 \pi q_{H}r \Gamma(\alpha) \Gamma(\beta)\gamma}   \sum_{i=1}^{N_{x}} \alpha_{xi} \zeta_{xi}^{-\beta_{xi}} \int_{-\pi}^{\pi}{\rm H}_{1,0 ; 0,2  ; 3,2}^{0,1;2,0 ; 0,3}\\
&\times \footnotesize{\left[\!\!\left.\begin{matrix}
\left(1 ; 1, \frac{1}{r}\right) \\
-\\ -\\
(0,1)\left(\beta_{xi}, 1\right) \\
\left(1-\eta_{s}^{2}\xi(\varphi), 1\right)(1-\alpha, 1)(1-\beta, 1) \\
\left(-\eta_{s}^{2}\xi(\varphi), 1\right)\left(1, \frac{1}{r}\right)
\end{matrix} \right|{ \zeta_{xi} C, \frac{A_{0} h_{a l} \left(\frac{\overline{\gamma_{H}}}{\gamma}\right)^{\frac{1}{r}}}{\alpha \beta}}\!\!\right]}d\varphi.
\end{align}
\begin{proof}
See Appendix \ref{C}.
\end{proof}
\end{lemma}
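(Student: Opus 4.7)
The plan is to attack $F_\gamma(\gamma)$ via the conditional-CDF decomposition. Conditioning on $\gamma_U=x>0$, the event $\{\gamma_H\gamma_U/(\gamma_U+C)\le\gamma\}$ is exactly $\{\gamma_H\le\gamma(1+C/x)\}$, so
$F_\gamma(\gamma)=\int_0^\infty F_{\gamma_H}\!\bigl(\gamma(1+C/x)\bigr)\,f_{\gamma_U}(x)\,dx$.
Substituting the Meijer-G representation of $F_{\gamma_H}$ from (17) and the mixture-Gamma form $f_{\gamma_U}(x)=\sum_i\alpha_{xi}x^{\beta_{xi}-1}e^{-\zeta_{xi}x}$ from Lemma~2, the constant ``$1$'' inside $F_{\gamma_H}=1-[\cdots]$ integrates against $f_{\gamma_U}$ to yield the leading $1$ of (35), while the $\varphi$-integral and the sum over $i$ pass outside. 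What is left is an $x$-integral of a Meijer-G whose argument carries the factor $(1+C/x)^{1/r}$.

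The next step is to unfold that Meijer-G by its defining Mellin--Barnes contour integral in a variable $s$; the argument then contributes $(1+C/x)^{s/r}$. Using the standard identity $(1+y)^{-a}=\frac{1}{2\pi j}\int_{L_t}\frac{\Gamma(a+t)\Gamma(-t)}{\Gamma(a)}y^t\,dt$, I would introduce a second Mellin--Barnes variable $t$ to represent $(1+C/x)^{s/r}$, producing a joint kernel in $(s,t)$ multiplied by a monomial $x^{s/r-t}$. Fubini then collapses the $x$-integration into a single Gamma integral of the form $\int_0^\infty x^{\beta_{xi}-1+s/r-t}e^{-\zeta_{xi}x}\,dx=\Gamma(\beta_{xi}+s/r-t)\,\zeta_{xi}^{-(\beta_{xi}+s/r-t)}$. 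Collecting the Gamma kernel in $s$ inherited from (17), the Gamma kernel in $t$ from the binomial expansion, and the new Gamma from the $x$-integration, one obtains a double contour integral in $(s,t)$ whose parameter lists match, after identification, the definition of the bivariate Fox-H function of \cite{Peppas,Htran2}; this yields (35). For the PDF (36), the same derivation is repeated with $F_{\gamma_H}$ replaced by $f_{\gamma_H}$ from (16) (so the $G_{2,4}^{4,0}$ is replaced by $G_{1,3}^{3,0}$ and the leading $1$ is absent), or equivalently by differentiating (35) under the contour integral and using $\frac{d}{d\gamma}(\gamma/\overline{\gamma_H})^{-s/r}=-\frac{s}{r}\gamma^{-1}(\gamma/\overline{\gamma_H})^{-s/r}$; either route converts the parameter pair $(0,1/r)$ to $(1,1/r)$ and introduces the prefactor $1/\gamma$.

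The main obstacle I foresee is the bookkeeping for the two Mellin--Barnes contours together with the parameter matching against the bivariate Fox-H definition used in \cite{Peppas}. In particular, the contours $L_s$ and $L_t$ must be chosen so that (i) all Gamma poles of the joint kernel are correctly separated, (ii) the swap of $(x,s,t)$ integration order is justified by absolute convergence, and (iii) the $x$-integration against $e^{-\zeta_{xi}x}$ yields a convergent Gamma integral for every $s,t$ on the contours. Once the double integral is written in canonical form, matching the parameter blocks $(1;1,1/r)$, $(0,1)(\beta_{xi},1)$, $(1-\eta_s^2\xi(\varphi),1)(1-\alpha,1)(1-\beta,1)$, and the lower block against the two variable arguments $\zeta_{xi}C$ and $A_0 h_{al}(\overline{\gamma_H}/\gamma)^{1/r}/(\alpha\beta)$ is a mechanical, if tedious, verification. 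The outer $\varphi$-integral and the sum over $i$ survive unchanged in (35) and (36).
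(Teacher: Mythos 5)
Your route is essentially the paper's: the same conditional-CDF decomposition $F_{\gamma}(\gamma)=\int_{0}^{\infty}F_{\gamma_{H}}\left(\gamma(1+C/x)\right)f_{\gamma_{U}}(x)\,dx$, substitution of the Meijer-G form of $F_{\gamma_{H}}$ and the mixture-Gamma form of $f_{\gamma_{U}}$, reduction to a double Mellin--Barnes integral, and identification with the bivariate Fox-H via Mittal--Gupta. The only tactical difference is where the second contour variable comes from: the paper converts $\exp(-\zeta_{xi}x)$ into a Mellin--Barnes integral and evaluates the remaining $x$-integral of $(1+C/x)^{t/r}x^{\beta_{xi}-1+s}$ as a Beta-function integral (Gradshteyn--Ryzhik 3.251/11), whereas you keep the exponential and instead open up $(1+C/x)^{s/r}$ with the binomial Mellin--Barnes identity, so the $x$-integral collapses to an elementary Gamma integral. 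After a linear relabeling of the contour variables the two kernels coincide, so this is a cosmetic rearrangement rather than a different proof. (Minor bookkeeping: expanding $(1+C/x)^{s/r}$ directly gives the monomial $x^{-t}$, not $x^{s/r-t}$; the latter arises only if you first factor out $x^{-s/r}$ and expand $(1+x/C)^{s/r}$.)

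One of your two suggested routes to the PDF is wrong as stated: replacing $F_{\gamma_{H}}$ by $f_{\gamma_{H}}$ in the conditional decomposition does not give $f_{\gamma}$, because differentiating $F_{\gamma_{H}}\left(\gamma(1+C/x)\right)$ with respect to $\gamma$ produces the Jacobian factor $(1+C/x)$; omitting it changes the coupled Gamma $\Gamma(s'+t'/r)$ in the final kernel and would not reproduce (36). Your alternative --- differentiating the double contour integral under the integral sign, which sends $1/\Gamma(1+t'/r)$ to $-\,(t'/r)/\Gamma(1+t'/r)\cdot\gamma^{-1}=-\gamma^{-1}/\Gamma(t'/r)$ and hence converts the lower parameter pair $(0,1/r)$ into $(1,1/r)$ --- is exactly what the paper does in Appendix C and is the route you should keep.
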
 
\begin{lemma}
The $s$-th moments of $\gamma$ can be demonstrated as
\begin{align}
\label{MomentsH}
\nonumber \mathbb{E}\left(\gamma^{s}\right)&=\frac{ \eta_{s}^{2} }{2 \pi q_{H} \Gamma(\alpha) \Gamma(\beta)}\left[\left(\frac{A_{0} h_{a l} }{\alpha \beta}\right)^{r} \overline{\gamma_{H}}\right]^{s} \\
\nonumber & \times \int_{-\pi}^{\pi} \sum_{i=1}^{N_{x}} \alpha_{x i} {\zeta_{x i}}^{-\beta_{x i}} {\rm G}_{1,2}^{2,1}\left[\zeta_{x i} C\bigg| \begin{matrix} 1-s \\0, \beta_{x i} \end{matrix}\right] \\
& \times \frac{\Gamma\left(\eta_{s}^{2} \xi(\varphi)+s r\right) \Gamma(\alpha+s r) \Gamma(\beta+s r)}{\Gamma\left(1+\eta_{s}^{2} \xi(\varphi)+s r\right) \Gamma(s)} d\varphi.
\end{align}
\begin{proof}
See Appendix \ref{D}.
\end{proof}
\end{lemma}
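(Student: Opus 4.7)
The plan is to exploit the independence of the FSO hop SNR $\gamma_H$ and the IRS-assisted RF hop SNR $\gamma_U$. From (\ref{TSNR}) we have $\gamma^s = \gamma_H^{s}\,[\gamma_U/(\gamma_U+C)]^{s}$, so
\begin{align*}
\mathbb{E}(\gamma^s) = \mathbb{E}(\gamma_H^s)\,\mathbb{E}\!\left\{\left[\frac{\gamma_U}{\gamma_U+C}\right]^{s}\right\}.
\end{align*}
This bypasses integrating the bivariate Fox-H PDF of Lemma 3 directly; instead each marginal factor is treated separately using (\ref{PDFH}) and (\ref{PDFUa}).

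For the FSO factor, I would insert the Meijer-G PDF (\ref{PDFH}), interchange the finite $\varphi$ integral with the Mellin integration by Fubini, and change variables to $u=(\alpha\beta/(A_0 h_{al}))(\gamma_H/\overline{\gamma_H})^{1/r}$. The Jacobian contributes both the bracket $[(A_0 h_{al}/(\alpha\beta))^{r}\overline{\gamma_H}]^{s}$ and a factor $r$ that cancels the $1/r$ already present in (\ref{PDFH}). What remains is the textbook Mellin transform of a Meijer-G kernel, $\int_0^{\infty} u^{rs-1}\,G_{1,3}^{3,0}[u\mid\cdots]\,du = \Gamma(\eta_s^2\xi(\varphi)+rs)\Gamma(\alpha+rs)\Gamma(\beta+rs)/\Gamma(1+\eta_s^2\xi(\varphi)+rs)$, which supplies exactly the angular Gamma quotient appearing in (\ref{MomentsH}).

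The real work is the RF factor. Substituting the mixture-Gamma PDF (\ref{PDFUa}) reduces the task to evaluating, for each component,
\begin{align*}
I_i = \int_0^{\infty}\frac{\gamma_U^{\,s+\beta_{xi}-1}}{(\gamma_U+C)^{s}}\,e^{-\zeta_{xi}\gamma_U}\,d\gamma_U.
\end{align*}
The main obstacle is handling the non-monomial factor $(\gamma_U+C)^{-s}$. My plan is to apply the Gamma representation $(\gamma_U+C)^{-s}=[\Gamma(s)]^{-1}\int_0^{\infty}t^{s-1}e^{-(\gamma_U+C)t}\,dt$, swap the order of integration, and perform the inner Gamma integral in $\gamma_U$. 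After the rescaling $u=\zeta_{xi}\gamma_U$, this collapses $I_i$ to $\zeta_{xi}^{-\beta_{xi}}\Gamma(s+\beta_{xi})/\Gamma(s)$ multiplied by the Tricomi confluent hypergeometric function $U(s,1-\beta_{xi},\zeta_{xi}C)$. Invoking the Meijer-G form $U(a,c,z)=[\Gamma(a)\Gamma(1+a-c)]^{-1}G_{1,2}^{2,1}[z\mid \begin{matrix} 1-a \\ 0,\,1-c \end{matrix}]$ with $(a,c)=(s,1-\beta_{xi})$ cancels $\Gamma(s+\beta_{xi})$ cleanly and delivers
\begin{align*}
I_i = \frac{\zeta_{xi}^{-\beta_{xi}}}{\Gamma(s)}\,G_{1,2}^{2,1}\!\left[\zeta_{xi}C\ \bigg|\ \begin{matrix} 1-s \\ 0,\,\beta_{xi} \end{matrix}\right].
\end{align*}

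Multiplying the FSO contribution by $\sum_i \alpha_{xi}I_i$ and regrouping constants reproduces (\ref{MomentsH}). Beyond the recognition of $I_i$ as a Tricomi $U$-function and its Meijer-G rewriting, the remaining steps are careful bookkeeping of the powers of $\overline{\gamma_H}$, $\alpha\beta$, and $A_0 h_{al}$, together with justifying the Fubini interchanges—which hold on the strip of $s$ where all the Mellin-Barnes integrals converge absolutely.
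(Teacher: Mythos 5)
Your proof is correct, but it takes a genuinely different route from the paper's Appendix~\ref{D}. The paper starts from the end-to-end PDF already derived in Appendix~\ref{C}: it unfolds the bivariate Fox-H representation (via \cite[Eq.~(2.3)]{Mittal}) into a single integral over the RF variable of a product of two univariate Fox-H kernels, evaluates the $\gamma$-integral as a Mellin transform of a Fox-H function using \cite[Eqs.~(1.59) and (2.8)]{Htran2}, and then recognizes the residual RF integral as $\mathrm{G}^{2,1}_{1,2}[\zeta_{xi}C\,|\,\cdot]$. You instead exploit the independence of the two hops to factor $\mathbb{E}(\gamma^s)=\mathbb{E}(\gamma_H^s)\,\mathbb{E}\{[\gamma_U/(\gamma_U+C)]^s\}$ and never touch the joint statistics: the FSO factor is the textbook Mellin transform of the $\mathrm{G}^{3,0}_{1,3}$ kernel in (\ref{PDFH}) (your bookkeeping of the Jacobian, the cancellation of $r$, and the resulting Gamma quotient are all right), and the RF factor is identified component-by-component as a Tricomi $U$-function whose $\mathrm{G}^{2,1}_{1,2}$ representation with $(a,c)=(s,1-\beta_{xi})$ yields exactly the $\zeta_{xi}^{-\beta_{xi}}\Gamma(s)^{-1}\mathrm{G}^{2,1}_{1,2}[\zeta_{xi}C\,|\,\cdot]$ factor of (\ref{MomentsH}). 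Your route is more elementary and makes the multiplicative FSO/RF structure of the moment transparent, at the cost of not reusing the Fox-H machinery that the paper needs anyway for the other metrics; the Fubini/Tonelli interchanges you invoke are unproblematic since every integrand is nonnegative for real $s>0$. One minor prose slip worth fixing: your intermediate description of $I_i$ as ``$\zeta_{xi}^{-\beta_{xi}}\Gamma(s+\beta_{xi})/\Gamma(s)$ multiplied by $U(s,1-\beta_{xi},\zeta_{xi}C)$'' carries a spurious $1/\Gamma(s)$ (and the rescaling is of the auxiliary variable $t$, not $\gamma_U$); the correct intermediate value is $\zeta_{xi}^{-\beta_{xi}}\Gamma(s+\beta_{xi})\,U(s,1-\beta_{xi},\zeta_{xi}C)$ because the integral representation of $U$ already absorbs a factor $\Gamma(s)$. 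Your final expression for $I_i$, and hence for $\mathbb{E}(\gamma^s)$, is nonetheless exactly (\ref{MomentsH}).
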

\begin{table*}[b]
\hrule
\vspace{1.5ex}
    \caption{MODULATION PARAMETERS}
    \centering
    \begin{tabular}{|l|l|l|l|l|l|}
\hline \text { Modulation } & $\boldsymbol{\delta_B}$ & $\boldsymbol{p_B}$ & $\boldsymbol{q}_{\boldsymbol{Bk}}$ & $\boldsymbol{N_B}$ & \text { Detection } \\
\hline \text { M-PSK } & $\frac{2}{\max \left(\log _{2} M, 2\right)}$ & 1 / 2 & $\sin ^{2}\left(\frac{(2 k-1) \pi}{M}\right)\log _{2} M$ & $\max \left(\frac{M}{4}, 1\right)$ & \text { Heterodyne } \\
\hline \text { M-QAM } & $\frac{4}{\log _{2} M}\left(1-\frac{1}{\sqrt{M}}\right)$ & 1 / 2 &$\frac{3(2 k-1)^{2}}{2(M-1)}\log _{2} M $& $\frac{\sqrt{M}}{2}$ & \text { Heterodyne } \\
\hline \text { OOK } & 1 & 1 / 2 & 1 / 2 & 1 & \text { IM/DD } \\
\hline
\end{tabular}
    \label{tab:my_label}
\end{table*}
\begin{figure*}[b]
\hrule
\begin{align*}\label{ABEREA}
\nonumber &I\left(p_B, q_{B k}\right)  \underset{\overline{\gamma_H}\gg 1}{\mathop{\approx }}-\frac{\eta_s^2}{4 \pi q_H \Gamma(\alpha) \Gamma(\beta)\Gamma\left(p_B\right)}   \int_{-\pi}^\pi \sum_{i=1}^{N_x} \alpha_{x i} \Bigg\{{(q_{Bk})}^{\frac{-\eta_s^2 \xi(\varphi)}{r}} \Gamma\left(\frac{\eta_s^2 \xi(\varphi)}{r}+p_B\right)  \\
\nonumber & \times \frac{C^{\beta_{x i}} \Gamma\left(\alpha-\eta_s^2 \xi(\varphi)\right) \Gamma\left(\beta-\eta_s^2 \xi(\varphi)\right)}{r \Gamma\left(1-\frac{\eta_s^2 \xi(\varphi)}{r}\right)}\left[\frac{\alpha \beta}{A_0 h_{a l}}\left(\frac{1}{\overline{\gamma_H}}\right)^{\frac{1}{r}}\right]^{\eta_s^2 \xi(\varphi)} {\rm G}_{1,2}^{2,1}\left[\begin{array}{c|c}
C \zeta_{x i} & \begin{array}{c}1-\beta_{x i}+\frac{\eta_s^2 \xi(\varphi)}{r} \\ 
0,-\beta_{x i}\end{array}
\end{array}\right] \\
\nonumber & +{(q_{B k})}^{-\frac{\alpha}{r} }\Gamma\left(\frac{\alpha}{r}+p_B\right) \frac{C^{\beta x i} \Gamma(\beta-\alpha)}{r \Gamma\left(1-\frac{\alpha}{r}\right)\left(\eta_s^2 \xi(\varphi)-\alpha\right)}\left[\frac{\alpha \beta}{A_0 h_{a l}}\left(\frac{1}{\overline{\gamma_H}}\right)^{\frac{1}{r}}\right]^\alpha {\rm G}_{1,2}^{2,1}\left[\begin{array}{c|c}
C \zeta_{x i} & \begin{array}{c}
1-\beta_{x i}+\frac{\beta}{r} \\
0,-\beta_{x i}
\end{array}
\end{array}\right] \\
& +{(q_{B k})}^{-\frac{\beta}{r}} \Gamma\left(\frac{\beta}{r}+p_B\right) \frac{C^{\beta x i} \Gamma(\alpha-\beta)}{r \Gamma\left(1-\frac{\beta}{r}\right)\left(\eta_s^2 \xi(\varphi)-\alpha\right)}\left[\frac{\alpha \beta}{A_0 h_{a l}}\left(\frac{1}{\overline{\gamma_H}}\right)^{\frac{1}{r}}\right]^\beta {\rm G}_{1,2}^{2,1}\left[\begin{array}{c|c}
C \zeta_{x i} & \begin{array}{c}
1-\beta_{x i}+\frac{\beta}{r} \\
0,-\beta_{x i}
\end{array}
\end{array}\right]\Bigg\} d\varphi.\tag{42}
\end{align*}
\hrule
\end{figure*}
\subsection{Performance Analysis}
\subsubsection{Outage Probability}
The OP refers to the probability that the end-to-end SNR is lower than a predetermined threshold $\gamma_{\rm{th}}$. By replacing $\gamma$ with $\gamma_{\rm{th}}$ in (\ref{finalCDF}), a unified expression for the OP in both detection methods can be easily derived.

 In (\ref{finalCDF}), the CDF is expressed using the bivariate Fox-H function, which is complex and not commonly available in
widely used mathematical software like MATLAB or MATHEMATICA. To address this and gain useful insights, we analyze the CDF in the high SNR regime using an asymptotic approach. This leads to a simplified expression for the CDF in (\ref{finalCDF}) that only involves elementary and standard functions already built into MATLAB and MATHEMATICA as shown in (\ref{finalCDFA}). 
\begin{proof}
See Appendix \ref{E}.
\end{proof}
 Besides its simplicity, this approximation in (\ref{finalCDFA}) is highly accurate and converges well to the exact result at high SNR levels. Moreover, this asymptotic result is especially useful for calculating the system's diversity order. Specifically, when $q_H=1$, the diversity gain of our system can be computed as follows:
\addtocounter{equation}{1}
\begin{align}
\label{diversity}
\mathcal{G}_{d}=\min \left(\frac{\alpha}{r}, \frac{\beta }{r}, \frac{\eta_{s}^{2} }{r}\right).
\end{align}

\subsubsection{Average Bit-Error Rate}
A compact and unified expression of the average BER for various coherent M-QAM and M-PSK modulation schemes, as well as IM/DD OOK modulation technique can be provided as \cite[Eq.~(22)]{dualhopFSO}
\begin{align}
\overline{P_{e}}=&\frac{\delta_{B}q_{Bk}^{p_{B}}}{2 \Gamma\left(p_{B}\right)} \sum_{k=1}^{N_{B}} \int_{0}^{\infty} \gamma^{p_{B}-1} \exp \left(-q_{B k} \gamma\right) F_{\gamma}(\gamma) d \gamma,
\end{align}
where $N_{B}$, $\delta_{B}$ , $p_{B}$, and $q_{B k}$ are detailed in Table \ref{tab:my_label}.
\begin{lemma}
Define  $I\left(p_{B}, q_{B k}\right)$ as
\begin{equation}
\label{ABERI}
\begin{aligned}
    I\left(p_{B}, q_{B k}\right)=&\frac{q_{Bk}^{p_{B}}}{2 \Gamma\left(p_{B}\right)} \int_{0}^{\infty} \gamma^{p_{B}-1} \exp \left(-q_{B k} \gamma\right) F_{\gamma}(\gamma) d \gamma,
\end{aligned}
\end{equation}
which can be evaluated as
\begin{align}
\nonumber &I\left(p_{B}, q_{B k}\right)=\frac{1}{2}-\frac{ \eta_{s}^{2} }{4 \pi rq_{H} \Gamma\left(p_{B}\right) \Gamma(\alpha) \Gamma(\beta)} 
\\ \nonumber &\times \int_{-\pi}^{\pi} \sum_{i=1}^{N_{x}} \alpha_{x i} {\zeta_{x i}^{-\beta_{x i}}} {\rm H}_{1,0 ; 0, 2 ; 3,3}^{0,1 ; 2,0 ; 1,3}\\&\footnotesize\left[\!\!\!\!\!\begin{array}{c|c}{\begin{array}{c}
\left(1 ; 1, \frac{1}{r}\right) \\
-\\- \\
(0,1)\left(\beta_{x i}, 1\right) \\
\left(1-\eta_{s}^{2} \xi(\varphi), 1\right)(1-\alpha, 1)(1-\beta, 1) \\
\left(p_{B}, \frac{1}{r}\right)\left(-\eta_{s}^{2} \xi(\varphi), 1\right)\left(0, \frac{1}{r}\right)
\end{array}}\!\!\!&\!\!{ \zeta_{x i} C, \frac{A_{0} h_{a l}\left(q_{B k} \overline{\gamma_{H}}\right)^{\frac{1}{r}} }{\alpha \beta}}\end{array}\!\!\!\!\right]d\varphi.
\label{ABERIE}
    \end{align}
    \begin{proof}
See Appendix \ref{F}.
\end{proof}
\end{lemma}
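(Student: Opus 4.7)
The plan is to start from the definition (\ref{ABERI}), substitute the CDF expression (\ref{finalCDF}) for $F_{\gamma}(\gamma)$, and split the integral into the constant part and the bivariate Fox-H part. The constant part, from the leading $1$ in (\ref{finalCDF}), reduces to a standard Gamma integral:
\begin{equation*}
\frac{q_{Bk}^{p_B}}{2\Gamma(p_B)}\int_0^{\infty}\gamma^{p_B-1}\exp(-q_{Bk}\gamma)\,d\gamma=\frac{1}{2},
\end{equation*}
which supplies the $1/2$ term appearing in the claimed result.

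The main work lies in evaluating the contribution from the bivariate Fox-H term. My strategy is to exchange the order of integration: write the bivariate Fox-H using its Mellin--Barnes double contour representation (with contour variables $s_1$ and $s_2$ corresponding to the two arguments $\zeta_{xi}C$ and $A_0 h_{al}(\overline{\gamma_H}/\gamma)^{1/r}/(\alpha\beta)$ respectively), pull the finite $\varphi$-integral and the sum over $i$ outside, and move the $\gamma$-integration inside the contour integrals. The $\gamma$-dependence enters only through $(\overline{\gamma_H}/\gamma)^{-s_2/r}=(\gamma/\overline{\gamma_H})^{s_2/r}$, so the inner integral reduces to
\begin{equation*}
\int_0^{\infty}\gamma^{p_B-1+s_2/r}\exp(-q_{Bk}\gamma)\,d\gamma=q_{Bk}^{-(p_B+s_2/r)}\,\Gamma\!\left(p_B+\tfrac{s_2}{r}\right),
\end{equation*}
valid for $\mathrm{Re}(p_B+s_2/r)>0$, which fixes an admissible shift of the $s_2$-contour.

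Now I reassemble. The factor $q_{Bk}^{-p_B}$ cancels the $q_{Bk}^{p_B}$ in the prefactor, and the remaining $q_{Bk}^{-s_2/r}$ is absorbed into the second Mellin argument, combining with $(\overline{\gamma_H})^{-s_2/r}$ and the existing $[A_0 h_{al}/(\alpha\beta)]^{-s_2}$ to produce the new argument $A_0 h_{al}(q_{Bk}\overline{\gamma_H})^{1/r}/(\alpha\beta)$, exactly as in (\ref{ABERIE}). The new Gamma factor $\Gamma(p_B+s_2/r)$ is inserted into the Mellin--Barnes integrand and, using the standard correspondence $\Gamma(d_1+\delta_1 s_2)\leftrightarrow (d_1,\delta_1)$ as a ``lower'' parameter of the second Fox-H block, it appears as the parameter pair $(p_B,1/r)$ placed first in the lower list. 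This increments $m_2$ from $0$ to $1$ and $q_2$ from $2$ to $3$, converting the ${\rm H}^{0,3}_{3,2}$ block of (\ref{finalCDF}) into the ${\rm H}^{1,3}_{3,3}$ block of (\ref{ABERIE}), while the shared parameters, the first Fox-H block, and the $\varphi$-integral are untouched. Finally, the $1/(2\Gamma(p_B))$ left over from the exchange combines with the prefactor of (\ref{finalCDF}) to give $\eta_s^{2}/[4\pi r q_H\Gamma(p_B)\Gamma(\alpha)\Gamma(\beta)]$, matching (\ref{ABERIE}).

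The main obstacle will be the bookkeeping of contour variables and the justification of swapping the $\gamma$-integration with the Mellin--Barnes contours: one must check that a common vertical strip exists where all Gamma arguments have positive real part (in particular $\mathrm{Re}(s_2)>-rp_B$) and where Fubini applies due to absolute convergence of the resulting double integral. Once this strip is identified, the placement of the new parameter $(p_B,1/r)$ in the lower-first position of the second Fox-H block is dictated by the standard definition of ${\rm H}^{m,n}_{p,q}$, and the remaining identification with (\ref{ABERIE}) is bookkeeping.
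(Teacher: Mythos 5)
Your proposal is correct and follows essentially the same route as the paper's Appendix F: the paper substitutes the Mellin--Barnes double-contour form of the CDF (its Eq. (C6), which is just the expanded form of (\ref{finalCDF})), evaluates the inner $\gamma$-integral as $\Gamma(p_B - t'/r)\,q_{Bk}^{-(p_B - t'/r)}$, absorbs the $q_{Bk}$ power into the second argument to get $A_0 h_{al}(q_{Bk}\overline{\gamma_H})^{1/r}/(\alpha\beta)$, and reads off the new parameter pair $(p_B,1/r)$ via the definition in \cite[Eq.~(1.1)]{Mittal}. Your contour/Fubini caveat is the only point the paper glosses over, but the mechanism and bookkeeping are identical.
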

\vspace{-2ex}
\noindent The average BER for OOK, M-QAM, and M-PSK modulations can then be expressed based on Lemma 5 as follows
\begin{align}\label{BERExp}
    \overline{{P}_{e}}=\delta_{B} \sum_{k=1}^{N_{B}} I\left(p_{B}, q_{B k}\right).
\end{align}
\noindent Moreover, a very tight asymptotic expression for the average BER in (\ref{BERExp}) may be derived when $\overline{\gamma_H}$ is large
by substituting (\ref{finalCDFA}) into (\ref{ABERI}) then employing $\int_0^{\infty} \gamma^{b-1} \exp (-a \gamma)d\gamma=\frac{ \Gamma(b)}{a^b}$ as shown by (\ref{ABEREA}).
\subsubsection{Ergodic Capacity}
The ergodic capacity of the end-to-end system where the FSO link is operating under either heterodyne or IM/DD techniques can be formulated as given in \cite[Eq.(26)]{lapidoth}, as follows
\addtocounter{equation}{1}
\begin{align}\label{DefC}
\overline{C}\triangleq \mathbb{E}[\ln(1+c_0\,\gamma)]=\int_{0}^{\infty}\ln(1+c_0\,\gamma)f_\gamma(\gamma)\,d\gamma,
\end{align}
where $c_0$ is a constant, taking the value $c_0=1$ for the heterodyne technique ($r=1$) and $c_0=e/(2\pi)$ for the IM/DD technique ($r=2$). Then, the bivariate Fox-H function can be used to formulate a unified equation for the ergodic capacity  applicable to both heterodyne and IM/DD types of detection as
\begin{align}
    \label{Capacity}
\nonumber &\overline{C}=\frac{ \eta_{s}^{2}}{2 \pi r q_{H} \Gamma(\alpha) \Gamma(\beta)} \sum_{i=1}^{N_{x}} \alpha_{x i}\zeta_{x i}^{-\beta_{x i}}\int_{-\pi}^{\pi}  {\rm H}_{1,0 ; 0,2 ; 4,3}^{0,1 ; 2,0 ; 1,4}\\&\footnotesize\left[\!\!\!\!\begin{array}{c|c}
\begin{matrix}
\left(1 ; 1, \frac{1}{r}\right) \\
-\\- \\
(0,1)\left(\beta_{x i}, 1\right) \\
\left(1-\eta_{s}^{2} \xi(\varphi), 1\right)(1-\alpha, 1)(1-\beta, 1)\left(1, \frac{1}{r}\right) \\
\left(1, \frac{1}{r}\right)\left(-\eta_{s}^{2} \xi(\varphi), 1\right)\left(0, \frac{1}{r}\right)
\end{matrix}\!\!\!&\!\!\zeta_{x i} C, \frac{A_{0} h_{a l}\left(c_{0} \overline{\gamma_{H}}\right)^{\frac{1}{r}} }{\alpha \beta}\end{array}\!\!\!\!\right]d\varphi.
\end{align}
\begin{proof}
See Appendix \ref{G}.
\end{proof}
\section{NUMERICAL ANALYSIS}
In this section, we demonstrate the mathematical formalism described above and verify its accuracy through Monte-Carlo simulations, employing the system settings outlined in Table \ref{tab2}. Analytical results are provided and compared with Monte-Carlo simulations. The comparison reveals a strong agreement between the derived analytical expressions and the simulated results, confirming the accuracy of the provided results.
\begin{table}[!h]
\caption{System Parameters}
\begin{tabular}{cccc}
\hline
Parameters          & Values                      & Parameters     & Values      \\ \hline
$H_O$               & 10 m                        & $F_0$          & $\infty$    \\
$H_U$               & 2 m                      & $\omega$       & 30 m/s      \\
$H_I$               & 20 m                        & $r_a$          & 5 mm        \\
  $H_H$   & 20 km   &           $\omega_b$     & 3$r_a$      \\
$d_{HI0}$           & 5 km                         & $d_{IU0}$      & 10 m        \\
V                   & 10 km                       & $\sigma_S$     & $r_a$       \\
$\lambda$           & 1550 nm                     & $q_V$          & 1.6         \\
$W_0$               & 1 mm                        & $N$            & 50          \\
$m_N$               & 1                           & $N_x$          & 75        \\
$\sigma_N^2$        & 2                           & $P_h$          & 0 dBm       \\
$L_{\text{Rain}}$   & 0.01 dB/km                  & $\sigma_U^2$   & $10^{-16}$  \\
$L_{\text{Atm}}$    & 5.4 $\times10^{-3}$ dB/km   & $C$            & 1           \\
$L_{\text{oth}}$    & 2 dB                        & $\gamma$       & 2 dB        \\
$G_{TX}$               & 50 dB                       &    $G_{RX}$          & 50 dB        \\
$P_{h0}$               & 0 dB                           & $\gamma_{th}$  & 2 dB\\$A$&   $1.7\times 10^{-13} \text{m}^{-2/3}$  &$f_c$ &  5 Ghz  \\ \hline
\multicolumn{4}{c}{Heavy shadowing (HS)  $b_R=0.063$ $m_R=1$ $\Omega_R=0.007$} \\
\multicolumn{4}{c}{Average showing (AS) $b_R=0.251$ $m_R=5$ $\Omega_R=0.279$}  \\
\multicolumn{4}{c}{Light shadowing (LS) $b_R=0.158$ $m_R=19$ $\Omega_R=1.29$}  \\ \hline
\end{tabular}\label{tab2}
\end{table}

First, we evaluate the accuracy of the mixture Gamma approximation of the RF channel and determine the optimal value for $N_x$. In this context, Fig.~\ref{fig2} depicts the OP of the RF link (HAP-IRS-user) under different shadowing conditions, for various values of $N_x$.
It can be clearly seen from the figure that as the shadowing conditions become more severe, the value of $N_x$ for achieving an accurate approximation decreases. Specifically, in the HS state, a
  value of 40 yields a highly accurate approximation, while in the AS scenario, 
  $N_x$ needs to be set to 50. In contrast, in the LS situation, $N_x$
  should be increased to 60 for an accurate approximation. 
Based on this analysis, to ensure more precise results, we select $N_x=75$ for this study.
\begin{figure}[!h]
\centering\includegraphics[scale=0.7, trim={0 0 0 0}, clip]{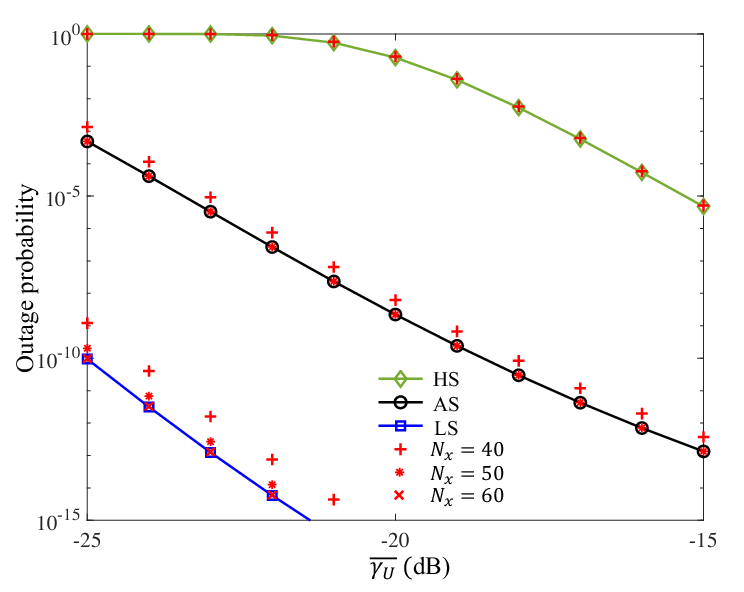}
    \caption{Outage probability of the RF channel under different shadowed conditions using different values of $N_x$ when $q_H=1$.}
    \label{fig2}
\end{figure}

 Fig.~\ref{Height2}  illustrates the outage probability performance under IM/DD and heterodyne detection schemes, for various altitudes of the HAP, denoted as $H_H$. As clearly observed from this figure, operating at a lower altitude improves the OP performance, regardless of the detection technique employed for the OGS-to-HAP FSO link.
This observation aligns with practical scenarios, as a higher HAP height increases the propagation distance between the OGS, HAP, and IRS, thereby impacting the communication link more significantly. 
For instance, under heterodyne detection technique and for an SNR of 40 dB, the system’s OP for HAP heights of 18 km, 20 km, and 22km are approximately $2.2 \times 10^{-2}$, $7.4 \times 10^{-3}$ , and  $2.4 \times 10^{-3}$, respectively.
\begin{figure}[!h]
\centering
\includegraphics[scale=0.75, trim={0 0 0 0}, clip]{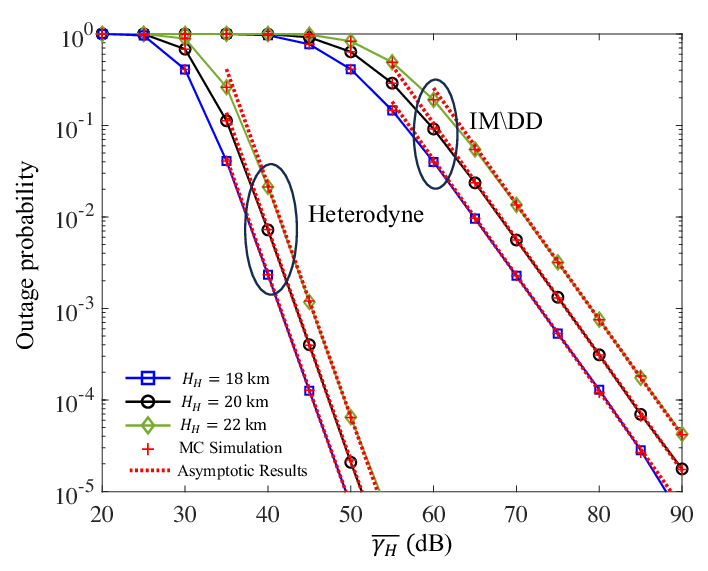}
    \caption {Outage probability for different \( H_H \) values under IM/DD and heterodyne detection techniques, considering LS conditions with \( q_H = 1 \) and \( \zeta = 40^\circ \).}
    \label{Height2}
\end{figure}

 \begin{figure}[!h]
\centering\includegraphics[scale=0.7, trim={5 0 0 0}, clip]{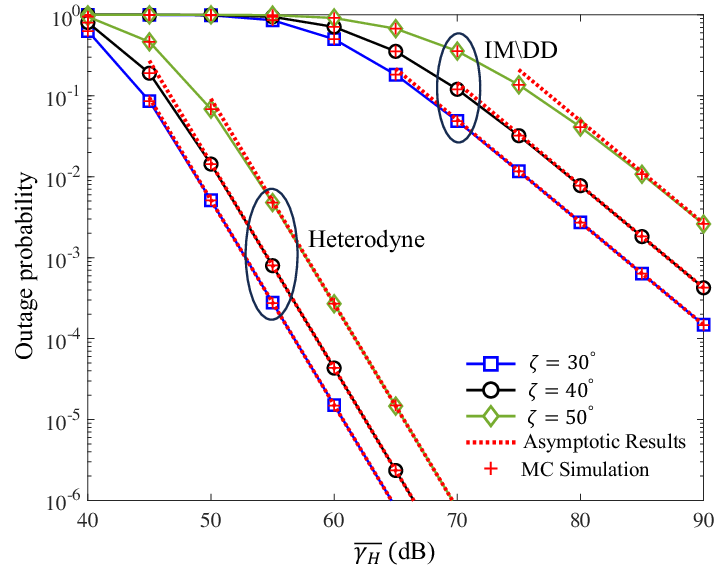}
    \caption{Outage probability under IM/DD and heterodyne detection techniques for different zenith angles under HS condition with $q_H=1$.}
    \label{fig3}
\end{figure}

In Fig.~\ref{fig3}, the OP is illustrated versus the average SNR of the FSO link for different zenith angle values $\zeta$ using both heterodyne and IM/DD techniques. 
We can clearly observe from this figure that regardless of whether we employ IM/DD or heterodyne detection, minimizing the zenith angle results in lower outage probability and improved system performance. This highlights the significance of selecting smaller zenith angles to optimize the reliability and effectiveness of the end-to-end communication system. In addition, the figure clearly shows a perfect agreement between the exact and asymptotic results, particularly at high SNR values. This convergence highlights the accuracy of our asymptotic analysis, demonstrating its effectiveness under both IM/DD and heterodyne detection schemes.
\begin{figure}[H]
\centering\includegraphics[scale=0.7, trim={0 0 0 0}, clip]{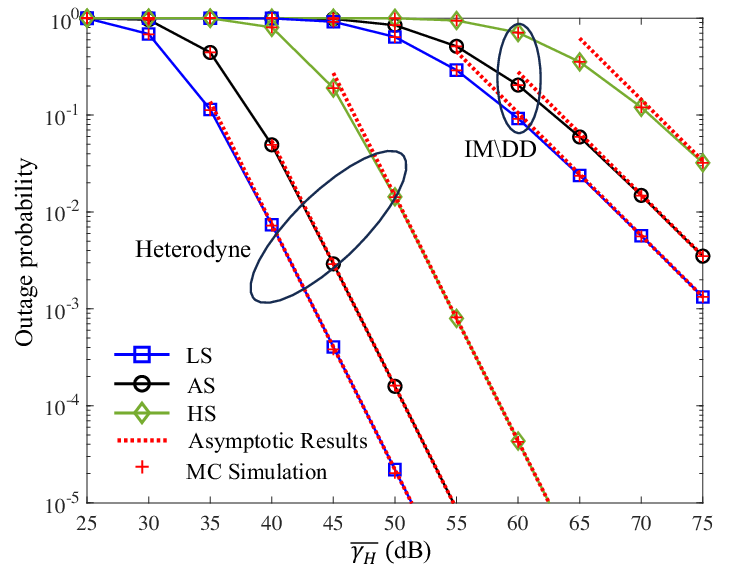}
    \caption{Outage probability for different shadowed conditions of the RF link under IM/DD and heterodyne techniques with $q_H=1$.}
\label{fig4}
\end{figure}

To reflect the impact of the RF channel shadowing conditions on the end-to-end outage performance, Fig.~\ref{fig4} depicts the OP under heavy (HS), average (AS), and light (LS) shadowing conditions, for both heterodyne and IM/DD schemes. Clearly, we can see that under less severe shadowing, the system performs better, irrespective of the chosen detection method for the FSO link. 

\begin{figure}[!h]
\centering\includegraphics[scale=0.7, trim={0 0 0 0}, clip]{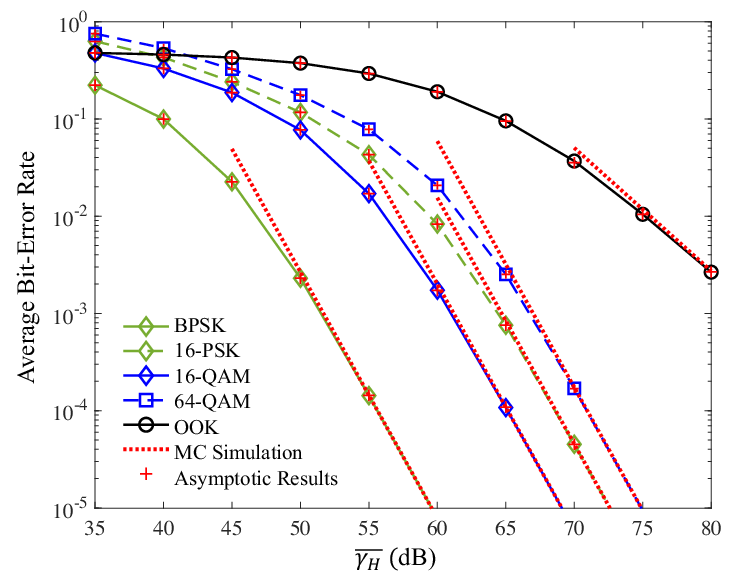}
    \caption{Average BER using different IM/DD and coherent modulation schemes under HS condition with $q_H=1$.}
    \label{fig5}
\end{figure}
Fig.~\ref{fig5} presents the average BER for various modulation schemes such as OOK for IM/DD technique as well as BPSK, 16-PSK, 16-QAM, and 64-QAM for heterodyne technique. 
Clearly, this figure illustrates that the application of the heterodyne technique significantly enhances the BER performance for all modulation schemes. In addition, as expected, Fig.~\ref{fig5} shows that 16-QAM performs better than 16-PSK. Moreover, among the modulation techniques provided, BPSK modulation stands out for its superior performance. Furthermore, it is clear that the asymptotic BER results converge to the exact results at high SNR, confirming the accuracy of the asymptotic analysis.

\begin{figure}[!h]
\centering\includegraphics[scale=0.7, trim={0 0 0 0}, clip]{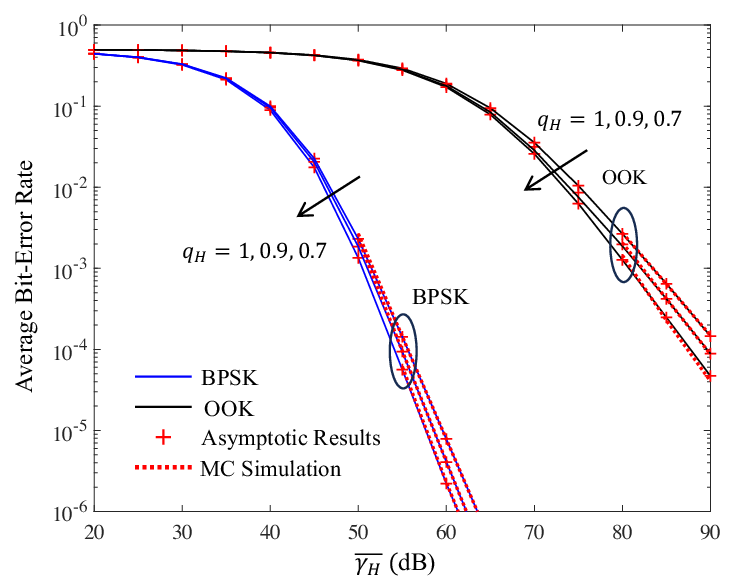}
    \caption{Average BER using both BPSK and OOK modulation techniques for different
ratios of vertical and horizontal beam deviations under HS condition.}
    \label{fig6}
\end{figure}
Fig.~\ref{fig6} illustrates the average BER under various $q_H$ values, for both OOK and BPSK modulation schemes. From Fig.~\ref{fig6}, it is apparent that when the beam orientation deviation exhibits asymmetrical behavior, the end-to-end communication system performs better. For instance, for an average SNR of $\overline{\gamma_H}=$55 dB, the average BER for BPSK modulation is approximately $\overline{P_e}=1.4\times 10^{-4}$ when horizontal and vertical orientation deviations are equal ($q_H = 1$). However, the average BER reduces to $\overline{P_e}=6.1\times 10^{-5}$ when the ratio of horizontal to vertical beam deviations is $q_H = 0.7$. Other noteworthy results are also obtained, which are consistent with the patterns seen in Fig.~\ref{fig5} and include the advantage of BPSK modulation over IM/DD as well as the high accuracy of the asymptotic BER result at high SNR regime. 
 \begin{figure}[!h]
\centering\includegraphics[scale=0.7, trim={0 0 0 0}, clip]{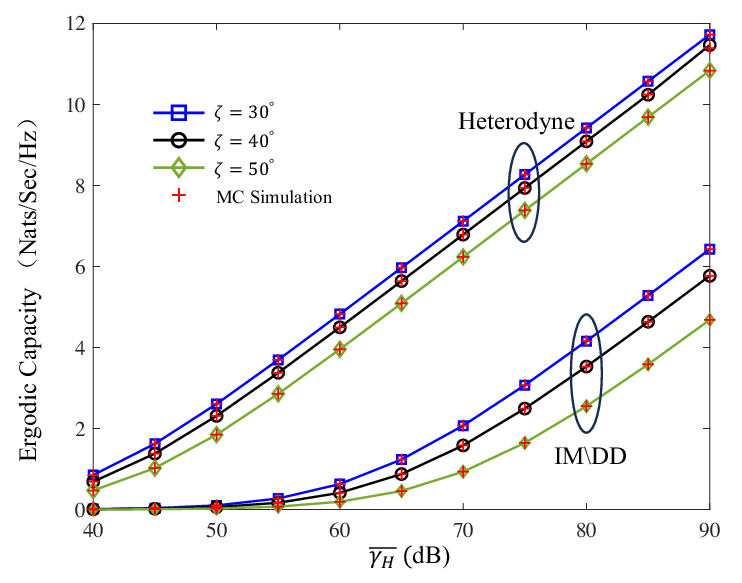}
    \caption{Ergodic capacity for different levels of the zenith angle with $q_H=1$ using both types of detection.}
    \label{fig7}
\end{figure}

The benefit of a smaller zenith angle on the ergodic capacity is shown in Fig.~\ref{fig7}. It is evident from Fig.~\ref{fig7} that the ergodic capacity performance improves as the zenith angle decreases, regardless of whether IM/DD or heterodyne techniques are employed. Furthermore, it is noticeable from Fig.~\ref{fig8} that under more severe shadowing conditions (HS), the end-to-end system performance declines for both types of detection methods. 
\begin{figure}[H]
\centering\includegraphics[scale=0.7, trim={0 0 0 0}, clip]{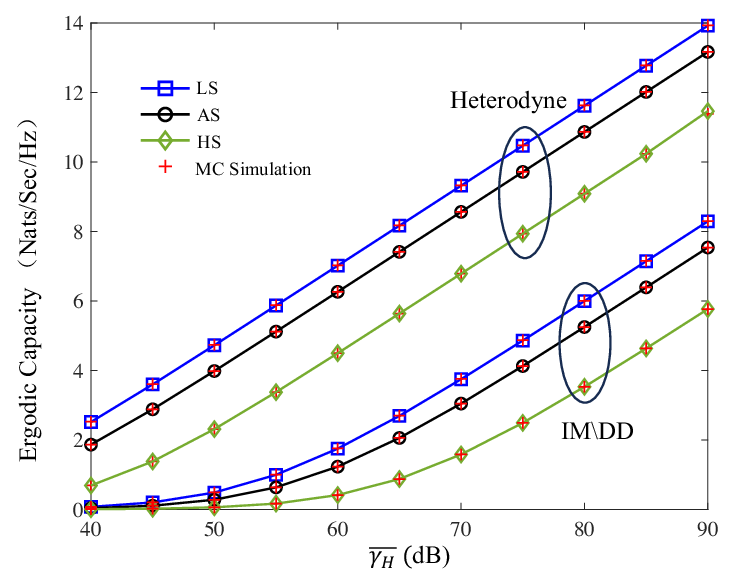}
    \caption{Ergodic capacity under IM/DD and heterodyne techniques for different shadowing condition with $q_H=1$.}
    \label{fig8}
\end{figure}
Moreover, in line with with the earlier analysis of outage performance, the heterodyne technique consistently outperforms IM/DD under all shadowing conditions.

\section{Conclusion}
This paper examines the performance of a ground-to-HAP-to-IRS-to-users system using RF technology for the HAP-to-IRS-to-users link and FSO technology for the ground-to-HAP link in terms of the average BER, the ergodic capacity, and the outage probability when the FSO link uses either heterodyne or IM/DD techniques. Utilizing the Gamma-Gamma distribution with generalized pointing errors for the FSO link, and approximating the cascaded shadowed-rician and Nakagami-m distributed RF link with the mixture Gamma model, analytical expressions for the aforementioned performance metrics are obtained in terms of the bivariate Fox-H function. Furthermore, asymptotic results are presented in terms of simple functions for the outage probability and the average BER in the high SNR regime.
The numerical results that have been provided have clearly shown how the system performance is affected by pointing errors, shadowing conditions, and the zenith angle. Our analysis has demonstrated that increasing the zenith angle or intensifying the shadowing effect of the RF link can lead to a significant degradation in the overall system performance. Furthermore, pointing errors can severely impair the end-to-end performance, particularly when vertical and horizontal beam deviations are equal. In addition, heterodyne detection demonstrates its effectiveness in enhancing the end-to-end system performance by increasing the ergodic capacity and reducing the average BER and the outage probability.
\appendices
\numberwithin{equation}{section}%

\section{PDF of $\gamma_U$}
\label{A}
In this appendix, we calculate the PDF of $\gamma_U=\overline{\gamma_U} Z^{2}$, where $Z=\left|\sum_{i=1}^{N} \alpha_{si}\beta_{si}\right|$. For sufficiently large values of $N$, according to the central limit theorem (CLT), the variable $Z$ follows a normal distribution.
The random variable $Z^2$ is then distributed according to a non-central chi-square distribution having one degree of freedom with mean $\mu_Z=N\mathbb{E}({\alpha_{si}})\mathbb{E}({\beta_{si}})$, variance ${\sigma_Z}^2=N\mathbb{E}({\alpha_{si}}^2)\mathbb{E}({\beta_{si}}^2)-{\mu_Z}^2$, which can be expressed as
\begin{align}
\label{A1}
\nonumber & f_{Z^2}\left(Z^2\right)=\frac{\left(\ {\mu_{Z}}^{2}\right)^{\frac{1}{4}}}{2 {\sigma_{Z}}^{2}} \exp \left(-\frac{ {\mu_{Z}}^{2}}{2 {\sigma_{Z}}^{2}}\right)\\&
\times {(Z^2)}^{-\frac{1}{4}} \exp \left(-\frac{Z^2}{2 {\sigma_{Z}}^{2}}\right) I_{-\frac{1}{2}}\left(\frac{\mu_{Z}}{{\sigma_{Z}}^{2}} \sqrt{Z^2}\right),
\end{align}
where $I_v(\cdot)$ stands for the $v$th-order modified Bessel function of the first kind.
Utilizing $\gamma_U=\overline{\gamma_U} Z^{2}$ and (\ref{A1}), the PDF of $\gamma_U$ can be given in (\ref{PDFU}). 
\section{Mixture Gamma Approximation}
\label{B}
Using \cite[Eq.~(8.445)]{intetable}, the PDF expression of $\gamma_{U}$ in (\ref{PDFU}) can be re-written as

\begin{align}\label{B1}
   \nonumber &f_{\gamma_{U}}(x)=\sum_{i=1}^{\infty} \frac{\left(\overline{\gamma_{U}} \mu_{\gamma_{U}}^{2}\right)^{\frac{1}{4}}}{2 \overline{\gamma_{U}} \sigma_{\gamma_{U}}{ }^{2}} \exp \left(-\frac{\mu_{\gamma_{U}}{ }^{2}}{2{\sigma_{\gamma_{U}}}^{2}}\right)\\
   & \times \frac{\left(\frac{\mu_{\gamma_{U}}}{2 \sigma_{\gamma_{U}}{ }^{2}} \sqrt{\frac{1}{\bar{\gamma}_{U}}}\right)^{-\frac{5}{2}+2 i}}{(i-1) ! \Gamma\left(i-\frac{1}{2}\right)}x^{-\frac{1}{2}+i-1} \exp \left(-\frac{x}{2 \overline{\gamma_{U}} \sigma_{\gamma_{U}}{ }^{2}}\right).
\end{align}
Now by applying \cite[Eq.~(1)]{mixturegamma}, we can approximate $f_{\gamma_{U}}$ using the mixture Gamma model as shown by (\ref{PDFUa}) with parameters obtained by matching the two PDFs in (\ref{PDFU}) and (\ref{B1}) as given in (\ref{PDFparam}).
\section{CDF and PDF of the End-to-End SNR}
\label{C}
\vspace{-1ex}
The CDF of the end-to end SNR $\gamma$ in (\ref{TSNR}) can be formulated as
\begin{align}
\label{C1}
    F_{\gamma}(\gamma)=\int_{0}^{\infty} F_{\gamma_{H}}\left(\gamma\left(1+\frac{C}{x}\right)\right) f_{\gamma_{U}}(x)\, dx.
\end{align}
Substituting (\ref{CDFH}) and (\ref{PDFUa}) into (\ref{C1}) and employing \cite[Eq.~(11)]{MeijerGalgorithm}, $F_{\gamma}(\gamma)$ can be written as
\begin{align}\label{C2}
\nonumber &F_{\gamma}(\gamma)=1-  \frac{ \eta_{s}^{2} }{2 \pi q_{H} \Gamma(\alpha) \Gamma(\beta)}\int_{-\pi}^{\pi} \sum_{i=1}^{N_{x}} \alpha_{x i} \int_{0}^{\infty} \\
\nonumber & \times {\rm G}_{2,4}^{4,0}\left[\frac{\alpha \beta}{A_{0} h_{a l}h_{af}}\left(\frac{\gamma}{\overline{\gamma_{H}}}\right)^{\frac{1}{r}}\left(1+\frac{C}{x}\right)^{\frac{1}{r}}  \Bigg | \begin{array}{c}
1+\eta_{s}^{2} \xi(\varphi), 1 \\
0, \eta_{s}^{2} \xi(\varphi), \alpha, \beta
\end{array}\right] \\
&\times  x^{\beta_{x i}-1} {\rm G}_{0,1}^{1,0}\left[\zeta_{x i} x \bigg| \begin{matrix} - \\
			0
		\end{matrix} \right]
 dx\, d\varphi.
\end{align}
Using the Meijer-G function's primary definition in \cite[Eq.~(9.301)]{intetable}, (\ref{C2}) can be expressed as
\begin{align}
\label{C3}
   \nonumber  &F_{\gamma}(\gamma)=1-\frac{ \eta_{s}^{2}}{2 \pi q_{H} \Gamma(\alpha) \Gamma(\beta)} \int_{-\pi}^{\pi} \sum_{i=1}^{N_{x}} \alpha_{x i} \\
  \nonumber & \times \frac{1}{(2 \pi \mathrm{i})^{2}} \int \frac{\Gamma\left(\eta_{s}^{2} \xi(\varphi)-t\right) \Gamma(\alpha-t) \Gamma(\beta-t)}{\mathrm{t} \Gamma\left(1+\eta_{s}^{2} \xi(\varphi)-t\right)}\\
  \nonumber &\times \left(\frac{\alpha \beta}{A_{0} h_{a l}}\right)^{t}\left(\frac{\gamma}{\overline{\gamma_{H}}}\right)^{\frac{t}{r}} \mathrm{~d} t \int \Gamma(-s)\left(\zeta_{x i}\right)^{s} ds \\
   &\times \int_{x=0}^{\infty}\left(1+\frac{C}{x}\right)^{\frac{t}{r}} x^{\beta_{x i}-1+s}\, dx\,d\varphi.
\end{align}
Then, by utilizing \cite[Eq.(3.251/11)]{intetable}
\begin{align}
\int_{0}^{\infty} x^{\mu-1}\left(1+\beta x^{p}\right)^{-\nu} d x=\frac{1}{p} \beta^{-\frac{\mu}{p}} \mathrm{~B}\left(\frac{\mu}{p}, \nu-\frac{\mu}{p}\right),
\end{align} 
where $\mathrm{B}(x, y)=\frac{\Gamma(x) \Gamma(y)}{\Gamma(x+y)}$ represents the Beta function \cite[Eq.~(8.384/1)]{intetable}.
Then,  (\ref{C3}) can be re-written in the form
\begin{align}
\label{C5}
\nonumber & F_{\gamma}(\gamma)=1-\frac{ \eta_{s}^{2} }{2 \pi q_{H} r\Gamma(\alpha) \Gamma(\beta)}\sum_{i=1}^{N_{x}} \alpha_{x i}  \int_{-\pi}^{\pi}C^{\beta_{x i}}\frac{1}{(2 \pi \mathrm{i})^{2}}  \\
\nonumber & \times
\iint\left(\frac{\alpha \beta}{A_{0} h_{a l}h_{af}}\right)^{t}
\left(\frac{\gamma}{\overline{\gamma_{H}}}\right)^{\frac{t}{r}}\left(C \zeta_{x i}\right)^{s} \Gamma\left(\beta_{x i}+s-\frac{t}{r}\right)\\
&\frac{\Gamma\left(\eta_{s}^{2} \xi(\varphi)-t\right) \Gamma(\alpha-t) \Gamma(\beta-t) }{\Gamma\left(1+\eta_{s}^{2} \xi(\varphi)-t\right) \Gamma\left(1-\frac{t}{r}\right)}  \Gamma\left(-\beta_{x i}-s\right) \Gamma(-s) ds dt d\varphi.
\end{align}
Let $s^{\prime}=s+\beta_{x i}, t^{\prime}=-t$, then (\ref{C5}) can be presented as
\begin{align}
\label{C6}
\nonumber &F_{\gamma}(\gamma)=1-\frac{ \eta_{s}^{2} }{2 \pi q_{H} r \Gamma(\alpha) \Gamma(\beta)} \int_{-\pi}^{\pi} \sum_{i=1}^{N_{x}} \alpha_{x i} \zeta_{x i}^{-\beta_{x i}} \frac{1}{(2 \pi \mathrm{i})^{2}}\\
\nonumber & \times \iint \Gamma\left(s^{\prime}+\frac{t^{\prime}}{r}\right) \Gamma\left(-s^{\prime}\right) \Gamma\left(\beta_{x i}\right. 
\left.-s^{\prime}\right)C \zeta_{x i}^{s^{\prime}}\\
\nonumber & \times \frac{\Gamma\left(\eta_{s}^{2} \xi(\varphi)+t^{\prime}\right) \Gamma\left(\alpha+t^{\prime}\right) \Gamma\left(\beta+t^{\prime}\right)}{\Gamma\left(1+\eta_{s}^{2} \xi(\varphi)+t^{\prime}\right) \Gamma\left(1+\frac{t^{\prime}}{r}\right)}\\
&\times \left[\frac{A_{0} h_{a l}h_{af}}{\alpha \beta}\left(\frac{\gamma_{H}}{\gamma}\right)^{\frac{1}{r}}\right]^{t^{\prime}} d s^{\prime} d t^{\prime} d \varphi.
\end{align}
Finally, by using \cite[Eq.~(1.1)]{Mittal}, we can easily obtain the desired CDF expression in (\ref{finalCDF}).

Now, by differentiating (\ref{C6}) with respect to $\gamma$, we can compute the PDF of $\gamma$ as
\begin{align}\label{C7}
\nonumber &f_{\gamma}(\gamma)=\frac{ \eta_{s}^{2}}{2 \pi q_{H} r \Gamma(\alpha) \Gamma(\beta)\gamma}\int_{-\pi}^{\pi} \sum_{i=1}^{N_{x}} \alpha_{x i} \zeta_{x i}^{-\beta_{x i}} \frac{1}{(2 \pi \mathrm{i})^{2}} \\
\nonumber& \times
 \iint \Gamma\left(s^{\prime}+\frac{t^{\prime}}{r}\right) \Gamma\left(-s^{\prime}\right) \Gamma\left(\beta_{x i}-s^{\prime}\right)\left(C \zeta_{x i}\right)^{s^{\prime}} \\
 \nonumber& \times
 \frac{\Gamma\left(\eta_{s}^{2} \xi(\varphi)+t^{\prime}\right) \Gamma\left(\alpha+t^{\prime}\right) \Gamma\left(\beta+t^{\prime}\right)}{\Gamma\left(1+\eta_{s}^{2} \xi(\varphi)+t^{\prime}\right) \Gamma\left(\frac{t^{\prime}}{r}\right)}\\
 &\times\left[\frac{A_{0} h_{a l}h_{af}}{\alpha \beta}\left(\frac{\bar{\gamma}_{H}}{\gamma}\right)^{\frac{1}{r}}\right]^{t^{\prime}} ds^{\prime} d t^{\prime} d \varphi,
\end{align}
which can be calculated using the bivariate Fox-H function after applying \cite[Eq.(1.1)]{Mittal} as outlined in (\ref{finalPDF}).

\section{Moments}
\label{D}
Using \cite[Eq.~(2.3)]{Mittal}, the PDF of $\gamma$ in (\ref{PDFH}) can be re-written as
\begin{align}
\label{D1}
\nonumber &f_\gamma(\gamma) 
 =\frac{\eta_s^2\gamma^{-1}}{2 \pi q_H r \Gamma(\alpha) \Gamma(\beta)}  \int_{-\pi}^\pi \sum_{i=1}^{N_x} \alpha_{x i} \zeta_{x i}^{-\beta_{x i}}\\
 \nonumber & \times \int_0^{\infty}x^{-1} \exp(-x) {\rm H}_{0,2}^{2,0}\left[\zeta_{x i} C x\bigg| \begin{matrix} - \\(0,1), (\beta_{x i},1) \end{matrix}\right] \\
& \times {\rm H}_{3,2}^{0,3}\left[\!\!\!\!
\begin{array}{c|c}
\footnotesize\begin{matrix}
\left(1-\eta_s^2 \xi(\varphi), 1\right)(1-\alpha, 1)(1-\beta, 1) \\
\left(-\eta_s^2 \xi(\varphi), 1\right)\left(1, \frac{1}{r}\right)
\end{matrix}\!\!\!&\!\!\!  \frac{A_0 h_{a l}\left(\frac{\overline{\gamma_H}}{\gamma}x\right)^{\frac{1}{r}}}{\alpha \beta} \end{array}\!\!\!\!\right] dx d\varphi,
\end{align}
where $\rm{H}_{\cdot,\cdot}^{\cdot,\cdot}[\cdot]$ refers to the Fox-H function \cite[Eq.~(2.9.1)]{Htran1}.

\noindent Now, the moments formulated as,  $\mathbb{E}\left(\gamma^s\right)=\int_0^{\infty} \gamma^{s} f_\gamma(\gamma) d \gamma$, can be computed by substituting (\ref{D1}) into this formula, yielding

\begin{align}
\nonumber &\mathbb{E}\left(\gamma^s\right)=\frac{\eta_s^2}{2 \pi q_H \Gamma(\alpha) \Gamma(\beta)} \int_{-\pi}^\pi \sum_{i=1}^{N_x} \alpha_{x i} \frac{\zeta_{x i}^{-\beta_{x i}}}{r}\int_0^{\infty}   x^{-1}\\
\nonumber & \times \exp(-x) {\rm H}_{0,2}^{2,0}\left[\zeta_{x i} C x\bigg| \begin{matrix} - \\(0,1), (\beta_{x i},1) \end{matrix}\right]
 \int_0^{\infty} \gamma^{s-1}  \\
 &\times{ {\rm H}_{3,2}^{0,3}\!\!\left[\!\!\!\!\footnotesize\begin{array}{c|c}
\begin{matrix}
\left(1-\eta_s^2 \xi(\varphi), 1\right)(1-\alpha, 1)(1-\beta, 1) \\
\left(-\eta_s^2 \xi(\varphi), 1\right)\left(1, \frac{1}{r}\right)
\end{matrix}\!\!\!&\!\!\!\frac{A_0 h_{a l}\left(\frac{\overline{\gamma_H}}{\gamma}x\right)^{\frac{1}{r}}}{\alpha \beta} \end{array}\!\!\!\!\right]}\normalsize d\gamma dx d\varphi.
\end{align}
Utilizing \cite[Eq.~(1.59)]{Htran2} and \cite[Eq.~(2.8)]{Htran2}, the moments of $\gamma$ can be evaluated according to (\ref{MomentsH}).

\section{Asymptotic Result}
\label{E}

For high values of $\overline{\gamma_{H}}$, the following Meijer-G function can be
approximated by using \cite[Eq.~(1.8.4)]{Htran1} as

\begin{align}
\label{E1}
   \nonumber  &{\rm G}_{2,4}^{4,0}\left[\frac{\alpha \beta}{A_{0} h_{a l}}\left(\frac{\gamma}{\overline{\gamma_{H}}}\right)^{\frac{1}{r}}\left(1+\frac{C}{x}\right)^{\frac{1}{r}}  \begin{array}{|c}
1+\eta_{s}^{2} \xi(\varphi), 1 \\
0, \eta_{s}^{2} \xi(\varphi), \alpha, \beta
\end{array}\right]\\
&\underset{\overline{\gamma_H}\gg 1}{\mathop{\approx }}\sum_{j}^{4} {h}_{j}\left[\frac{\alpha \beta}{A_{0} h_{a l}}\left(\frac{\gamma}{\overline{\gamma_{H}}}\right)^{\frac{1}{r}}\left(1+\frac{C}{x}\right)^{\frac{1}{r}}\right]^{\mathcal{H}_j},
\end{align}
where ${h}_j=\left\{\eta_{s}^{2} \xi(\varphi),\alpha,\beta,0\right\}$, and

\begin{align}
    \begin{cases}
 \mathcal{H}_{1}=-\frac{\Gamma\left(\alpha-\eta_{s}^{2} \xi(\varphi)\right) \Gamma\left(\beta-\eta_{s}^{2} \xi(\varphi)\right)}{\eta_{s}^{2} \xi(\varphi)},
\\[1.5ex]
\mathcal{H}_{2}=-\frac{\Gamma(\beta-\alpha)}{\alpha\left(\eta_{s}^{2} \xi(\varphi)-\alpha\right)},\\[1.5ex]
\mathcal{H}_{3}=-\frac{\Gamma(\alpha-\beta)}{\beta\left(\eta_{s}^{2} \xi(\varphi)-\beta\right)},
 \\[1.5ex]
 \mathcal{H}_{4}=\frac{\Gamma(\alpha) \Gamma(\beta)}{\eta_{s}^{2} \xi(\varphi)}.
\end{cases}
\end{align}
Substituting (\ref{E1}) into (\ref{C2}) and applying \cite[Eq.~(9.301)]{intetable}, the CDF expression in (\ref{finalCDF}) can be re-written as

\begin{align}
  \nonumber  &F_{\gamma}(\gamma)\underset{\overline{\gamma_H}\gg 1}{\mathop{\approx }}1-\frac{\eta_{s}^{2}  }{2 \pi q_{H} \Gamma(\alpha) \Gamma(\beta)} \int_{-\pi}^{\pi} \sum_{i=1}^{N_{x}} \alpha_{x i} \\
  \nonumber &\times \sum_{j}^{4} \int_{0}^{\infty} h_{j}\left[\frac{\alpha \beta}{A_{0} h_{a l}h_{af}}\left(\frac{\gamma}{\overline{\gamma_{H}}}\right)^{\frac{1}{r}}\left(1+\frac{C}{x}\right)^{\frac{1}{r}}\right]^{\mathcal{H} j}\\
  & \times x^{\beta_{x i}-1}\frac{1}{2 \pi \mathrm{i}} \int \Gamma(-s)\left(\zeta_{x i} x\right)^{s} ds\, dx\, d\varphi.
\end{align}
By utilizing the integral in \cite[Eq.(3.194/3)]{intetable}

\begin{equation}
    \int_0^{\infty} \frac{x^{\mu-1} d x}{(1+\beta x)^\nu}=\beta^{-\mu} \mathrm{B}(\mu, \nu-\mu),
\end{equation} 
where $\mathrm{B}(x, y)=\frac{\Gamma(x) \Gamma(y)}{\Gamma(x+y)}$ represents the Beta function \cite[Eq.~(8.384/1)]{intetable}, we can obtain the asymptotic equation of the CDF in (\ref{finalCDFA}).
\section{Average BER}
\label{F}
Substituting (\ref{C6}) into (\ref{ABERI}) yields 

\begin{align}\label{H1}
\nonumber &I\left(p_B, q_{B k}\right)=\frac{1}{2}-  \frac{q_{B k}{ }^{p_B}}{2 \Gamma\left(p_B\right)} \int_0^{\infty} \gamma^{p_B-1-\frac{t^{\prime}}{r}} \exp \left(-q_{B k} \gamma\right)d \gamma\\
\nonumber &  \times \frac{\eta_s^2}{2 \pi q_H r\Gamma(\alpha) \Gamma(\beta)}   \int_{-\pi}^\pi \sum_{i=1}^{N_x} \alpha_{x i} \zeta_{x i}^{-\beta_{x i}} \frac{1}{(2 \pi \mathrm{i})^2} \iint \\
\nonumber & \times \Gamma\left(s^{\prime}+\frac{t^{\prime}}{r}\right) \Gamma\left(-s^{\prime}\right) \Gamma\left(\beta_{x i}-s^{\prime}\right)\left(C \zeta_{x i}\right)^{s^{\prime}} \\
\nonumber & \times \frac{\Gamma\left(\eta_s^2 \xi(\varphi)+t^{\prime}\right) \Gamma\left(\alpha+t^{\prime}\right) \Gamma\left(\beta+t^{\prime}\right)}{\Gamma\left(1+\eta_s^2 \xi(\varphi)+t^{\prime}\right) \Gamma\left(1+\frac{t^{\prime}}{r}\right)}\\
& \times \left[\frac{A_0 h_{a l}}{\alpha \beta}\left(\overline{\gamma_H}\right)^{\frac{1}{r}}\right]^{t^{\prime}} ds^{\prime} dt^{\prime} d\varphi.
\end{align}
Using the definition of Gamma Function $\Gamma(z)$ in \cite{intetable}, $\Gamma(z)=\int_0^{\infty} t^{z-1} e^{-t} d t$, (\ref{H1}) can be expressed as

\begin{align}\label{F2}
\nonumber &I\left(p_B, q_{B k}\right)=\frac{1}{2} - \frac{\eta_s^2}{4 \pi q_H r \Gamma(\alpha) \Gamma(\beta)\Gamma\left(p_B\right)} \\
\nonumber & \times \int_{-\pi}^\pi \sum_{i=1}^{N_x} \alpha_{x i} \zeta_{x i}^{-\beta_{x i}} \frac{1}{(2 \pi \mathrm{i})^2}   \iint \Gamma\left(s^{\prime}+\frac{t^{\prime}}{r}\right) \\
\nonumber & \times \Gamma\left(-s^{\prime}\right) \Gamma\left(\beta_{x i}-s^{\prime}\right)\left(C \zeta_{x i}\right)^{s^{\prime}} \\
\nonumber & \times \frac{\Gamma\left(\eta_s^2 \xi(\varphi)+t^{\prime}\right) \Gamma\left(\alpha+t^{\prime}\right) \Gamma\left(\beta+t^{\prime}\right) \Gamma\left(p_B-\frac{t^{\prime}}{r}\right)}{\Gamma\left(1+\eta_s^2 \xi(\varphi)+t^{\prime}\right) \Gamma\left(1+\frac{t^{\prime}}{r}\right)}\\
&\times\left[\frac{A_0 h_{a l}}{\alpha \beta}\left(q_{B k} \bar{\gamma}_H\right)^{\frac{1}{r}}\right]^{t^{\prime}} ds^{\prime} dt^{\prime} d\varphi.
\end{align}
Finally, we can derive (\ref{ABERIE}) by applying \cite[Eq.~(1.1)]{Mittal} to (\ref{F2}).

\section{Ergodic Capacity}
\label{G}
Substituting (\ref{C7}) into (\ref{DefC}) results in
\begin{align}
\nonumber & \overline{C}=\frac{\eta_s^2}{2 \pi q_H r\Gamma(\alpha) \Gamma(\beta)}  \int_{-\pi}^\pi \sum_{i=1}^{N_x} \alpha_{x i}\zeta_{x i}^{-\beta_{x i}} \frac{1}{(2 \pi \mathrm{i})^2}\\
\nonumber & \times \iint \Gamma\left(s^{\prime}+\frac{t^{\prime}}{r}\right) \Gamma\left(-s^{\prime}\right) 
\Gamma\left(\beta_{x i}-s^{\prime}\right)\left(C \zeta_{x i}\right)^{s^{\prime}}\Gamma\left(\frac{t^{\prime}}{r}\right)  \\
\nonumber & \times\frac{\Gamma\left(\eta_s^2 \xi(\varphi)+t^{\prime}\right)\Gamma\left(\alpha+t^{\prime}\right) \Gamma\left(\beta+t^{\prime}\right)}{\Gamma\left(1+\eta_s^2 \xi(\varphi)+t^{\prime}\right) \Gamma\left(1+\frac{t^{\prime}}{r}\right)}  \\
&\times\Gamma\left(1-\frac{t^{\prime}}{r}\right)\left[\frac{A_0 h_{a l}}{\alpha \beta}\left(c_0 \overline{\gamma_H}\right)^{\frac{1}{r}}\right]^{t^{\prime}} d s^{\prime} d t^{\prime} d \varphi.
\end{align}
Using the integral identity $\int_0^{\infty} x^{\mu-1} \ln (1+\gamma x) d x=\frac{\pi}{\mu \gamma^\mu \sin \mu \pi}$ in \cite[Eq.~(4.293/10)]{intetable} along with  $\Gamma(z) \Gamma(1-z)=\frac{\pi}{\sin \pi z}$ in \cite[Eq.(2), p99]{specialfunc}, and applying (1.1) of \cite{Mittal}, the ergodic capacity can be derived in (\ref{Capacity}).

\bibliographystyle{IEEEtran}
\bibliography{IEEEexample}

\end{document}